\newtheorem{theorem}{Theorem}
\theoremstyle{definition}
\theoremstyle{remark}
\newtheorem{remark}[theorem]{Remark}
\numberwithin{equation}{section}
\begin{document}

\title[$L^p$-Green Potential Estimates on Noncompact Riemannian Manifolds]{Coarse and Precise $L^p$-Green
Potential Estimates on Noncompact Riemannian Manifolds\ $^\dag$}


\thanks{$^\dag$ Supported in part by Natural Science and
Engineering Research Council of Canada.}



\author{Jie Xiao}
\address{Department of Mathematics and Statistics, Memorial University of Newfoundland, St. John's, NL A1C 5S7, Canada}
\email{jxiao@mun.ca}

\subjclass[2000]{Primary 53C20, 31C12; Secondary 58B20}

\date{}


\keywords{}

\begin{abstract}
We are concerned about the coarse and precise aspects of a priori
estimates for Green's function of a regular domain for the
Laplacian-Betrami operator on any $3\le n$-dimensional complete
non-compact boundary-free Riemannian manifold through the square
Sobolev/Nash/logarithmic-Sobolev inequalities plus the rough and
sharp Euclidean isoperimetric inequalities. Consequently, we are led
to evaluate the critical limit of an induced monotone Green's
functional using the asymptotic behavior of the Lorentz norm deficit
of Green's function at the infinity, as well as the harmonic radius
of a regular domain in the Riemannian manifold with nonnegative
Ricci curvature.
\end{abstract}
\maketitle

\section{Introduction}

To highlight the key issues around the current paper, let us recall
a few of background materials as follows.

Let $M^n$ be a complete Riemannian manifold of dimension $n\ge 3$
and its length element take in local coordinates the following form
$$
ds^2=\sum_{j,k=1}^n g_{jk}(x)dx^jdx^k,
$$
where $(g_{jk})$ is a symmetric positive definite matrix leading:
the inverse matrix $(g_{jk})^{-1}=(g^{jk})$; the determinant
$g=\hbox{det}(g_{jk})$; the Riemannian volume element
$dV(x)=\sqrt{\hbox{det}(g_{jk})}dx$; and the distance between two
points $x,y\in M^n$: $d(x,y)=\inf_\gamma L(\gamma)$ in which the
infimum is taken over all piece-wise $C^1$-curves $\gamma$ in $M^n$
with $L(\gamma)$ being defined as the sum of the lengths
$$
L(\gamma_i)=\int_0^1|\gamma_i'(t)|\,dt;\quad \gamma_i(0)=x_i\
\hbox{and}\ \gamma_i(1)=y_i
$$
of the $C^1$ pieces $\gamma_i: [0,1]\mapsto M^n$ making $\gamma$.
One way of measuring the degree to which the geometry decided by
$ds^2$ might be different from $dx^2$ of the Euclidean space
$\mathbb R^n$ is the Ricci curvature tensor $Ric(u,v)$ -- this
tensor, acting on two vectors $u$ and $v$ in the tangent space
$T_pM^n$ of $M^n$ at the point $p\in M^n$, is defined as the trace
of the linear transformation $w\mapsto R(w,v)u$, a self-mapping of
$T_pM^n$, where $R(\cdot,\cdot)$ stands for the Riemann curvature
tensor. Since $Ric(u,v)=Ric(v,u)$, the Ricci tensor is completely
determined by $Ric(u,u)$ for all vectors $u$ of unit length -- this
function on the family of unit tangent vectors is usually called the
Ricci curvature -- when $Ric(u,u)$ is nonnegative for all unit
tangent vectors $u$ then $(M^n,ds^2)$ is said to be have nonnegative
Ricci curvature.

Next, denote by $\nabla$ and $\Delta$ the gradient and
Lapacian-Bertrami operators which are regarded as the most
fundamental operators on $(M^n,ds^2)$ and determined respectively in
local coordinates via:
$$
\nabla=\Big(\sum_{k=1}^n
g^{1k}\frac{\partial}{\partial x_k},...,\sum_{k=1}^n
g^{nk}\frac{\partial}{\partial x_k}\Big)
$$
and
$$
\Delta =-\frac{1}{\sqrt{g}}\sum_{j,k=1}^n\frac{\partial}{\partial
x_j}\Big(\sqrt{g}g^{jk}\frac{\partial}{\partial x_k}\Big).
$$
Associated with these two operators are the forthcoming three curvature-free quantities:
capacity, heat kernel and Green's function. If $C^\infty_0(E)$ means the class of all $C^\infty$ smooth functions
compactly supported in a set $E\subseteq M^n$, then the harmonic/Newtonian/Wiener capacity of a pre-compact open
set $\Omega\subseteq M^n$ is defined by
$$
cap_2(\Omega)=\inf\Big\{\int_{\Omega}|\nabla f|^2\,dV:\quad f\in
C^\infty_0(M^n),\ f\ge 1\ \hbox{on}\ \Omega\Big\}.
$$
Usually, $f$ is called a ($1\le p<\infty$, homogeneous) Sobolev
function on $\Omega$ provided $\int_{\Omega}|\nabla
f|^p\,dV<\infty$. Moreover, the notation $p_t(x,y)$ is used as the
heat kernel on $M^n$ -- that is -- the smallest positive solution
$u(t,x,y)$ to the heat equation
\[
\left\{\begin{array} {r@{\;,\quad}l} \frac{\partial}{\partial t}
u(t,x,y)=\Delta u(t,x,y)& (t,x,y)\in(0,\infty)\times M^n\times M^n,\\
\lim_{t\to 0}u(t,x,y)=\delta_x(y) & (x,y)\in M^n\times M^n,
\end{array}
\right.
\]
where $\delta_x(y)$ is the Dirac measure. At the same time, the
symbol $G(x,y)=\sup_{\Omega} G_\Omega(x,y)$ is chosen for the
minimal Green's function of $M^n$ where the supremum is taken over
all regular (meaning that the Dirichlet problem is uniquely
solvable) bounded open sets $\Omega\subseteq M^n$ containing $x$,
and $G_\Omega(x,y)$ is the nonnegative Green's function of $\Omega$
obeying
\[
\left\{\begin{array} {r@{\;,\quad}l}
\Delta G_\Omega(x,y)=\delta_x(y) & y\in\Omega,\\
G_\Omega(x,y)=0 & y\in M^n\setminus\Omega.
\end{array}
\right.
\]
Of course, the last equation on $\Delta$ is understood in the sense
of distribution, i.e.,
$$
\int_\Omega \langle \nabla G_\Omega(x,\cdot),\nabla \phi\rangle
dV=\phi(x)\quad\hbox{for\ all\ functions}\ \phi\in
C_0^\infty(\Omega),
$$
where $\langle\cdot,\cdot\rangle$ stands for the Riemannian metric
on $M^n$. The above-considered manifolds can be classified into two
types: non-parabolic or parabolic according to that $G(x,y)$ is
finite or infinite for all $y\in M^n\setminus\{x\}$ or all $y\in
M^n$. It is interesting, natural and important that the classical
relation between the heat kernel and the minimal Green's function
for a non-parabolic manifold is determined by
$$
G(x,y)=\int_0^\infty p_t(x,y)\,dt,\quad (x,y)\in M^n\times M^n.
$$
For more closely-related information see also Varopolulos \cite{Va},
Li-Yau \cite{LiYau}, Li-Tam \cite{LiT}, Li-Tam-Wang \cite{LiTW}, Li
\cite{LiP1}, Schoen \cite{Sch} and Davies-Safarov \cite{DaSa}.

More than that, however, we will employ $X\lesssim Y$ (i.e.,
$Y\gtrsim X$) to represent that there is a constant $\kappa>0$ such
that $X\le\kappa Y$, but also make $X\approx Y$ stand for both
$X\lesssim Y$ and $Y\lesssim X$.

With the help of the previously-described notions and notations, in
the subsequent Sections 2-3 we will, coarsely and precisely, treat a
series of monotonic $L^p ({0\le p<\frac{n}{n-2}})$-estimates for
Green's functions associated with Laplacian-Betrami operators on
non-compact complete boundary-free Riemannian manifolds with
dimension $n\ge 3$ through three mutually-implied inequalities --
$L^2$-Sobolev, -Nash and -logarithmic Sobolev inequalities.

In Theorem \ref{thm1} we will revisit five known characterizations
(Faber-Krahn's eigenvalue inequality, Maz'ya iso-capacitary
inequality, On/Off-diagonal upper bound of heat kernel and
distributional behavior of the Green function of the manifold) of
the $L^2$-Sobolev/Nash/Log-Sobolev inequalities, and in Theorem
\ref{thm1+} we give two more new ones -- Lorentz-Green local
inequality (i.e., distributional behavior of the Green function of
any regular bounded open subset of the manifold) and the following
Green potential-volume inequality:
$$
\sup_{x\in\Omega}\int_\Omega G_\Omega(x,y)\,dV(y)\lesssim
V(\Omega)^\frac{2}{n}.
$$
In the above and below, $\Omega$ is a regular bounded open set
$\Omega\subseteq M^n$ (containing $x$ whenever needed).
Interestingly, this last estimate leads to a discovery of the
comparison principle as stated in Theorem \ref{cor1}, especially
saying that if the coarse $L^1$-Sobolev inequality (equivalently,
the rough isoperimetric inequality of Euclidean type) is valid then
$$
\left(\int_\Omega
G_\Omega(x,y)^p\,dV(y)\right)^\frac{n}{n-p(n-2)}\lesssim
\left(\int_\Omega G_\Omega(x,y)^q\,dV(y)\right)^\frac{n}{n-q(n-2)}
$$
holds for $0\le q<p<\frac{n}{n-2}$, but also $G_\Omega(x,\cdot)$
belongs uniformly to the Lorentz space
$L^\ast_{\frac{n}{n-2}}(\Omega)$; see also Gr\"uter-Widman
\cite{GrWi} for an earlier result in $\mathbb R^n$.

In Theorems \ref{thm2} \& \ref{thm2+}, we will give the sharp
inequalities corresponding to those in Theorems \ref{thm1} \&
\ref{thm1+} under the assumption that $M^n$ allows the optimal
isoperimetric inequality of Euclidean type to exist. In particular,
we will establish such a classification result that together with
the nonnegative Ricci curvature and the volume $\omega_n$ of the
unit ball of $\mathbb R^n$, either the sharp Maz'ya's iso-capacitary
inequality
$$
V(\Omega)^{\frac{n-2}{n}}\le\big(n(n-2)\omega_n^\frac2n\big)^{-1}
cap_2(\Omega)
$$
or the sharp Green potential-volume inequality (cf. Weinberger
\cite{We} for the $\mathbb R^n$-case):
$$
\sup_{x\in\Omega}\int_\Omega G_\Omega(x,y)\,dV(y)\le
(2n\omega_n^\frac2n)^{-1}V(\Omega)^\frac2n
$$
ensures that $M^n$ is either isometric or diffeomorphic to $\mathbb
R^n$. Even more interestingly, this last inequality will drive us to
obtain the optimal monotone principle for the $L^p$-Green potentials
-- Theorem \ref{con2}: under $M^n$ being of the sharp isoperimetric
inequality of Euclidean type, the functional
$$
\left(\frac{\big(n(n-2)\omega_n^\frac2n\big)^p}{pB(\frac{n}{n-2}-p,p)}\int_\Omega
G_\Omega(x,y)^p\,dV(y)\right)^\frac{n}{n-p(n-2)}
$$
is monotone decreasing as $p$ varies from $0$ to $n/(n-2)$ (viewed
as the critical index) where $B(\cdot,\cdot)$ is the classical Beta
function. Consequently, we can evaluate the limit of the above
monotone functional as $p\to n/(n-2)$ -- this limit is fortunately
found to equal $\lim_{t\to\infty}D_\Omega(x,t)^\frac{n}{2-n}$ where
$$
D_\Omega(x,t)=V(\{y\in\Omega: G_\Omega(x,y)\ge
t\})^\frac{2-n}{n}-n(n-2)\omega_n^\frac2n t
$$
is the so-called Lorentz norm deficit of $G_\Omega(x,\cdot)$ at
$(x,t)$. Even more fortunately, if $(M^n,ds^2)$ has also nonnegative
Ricci curvature then
$$
\lim_{t\to\infty}D_\Omega(x,t)^\frac{n}{2-n}=\omega_n R_\Omega(x)^n,
$$
i.e., the volume of the geodesic ball with center $x$ and harmonic
radius $R_\Omega(x)$ of $\Omega$ at $x$ -- it seems worthwhile to
indicate that $R_\Omega^{2-n}(x)$ is the so-called Robin mass of
$\Omega$ at $x$ -- see also Bandle-Flucher \cite{BaFl}, Flucher
\cite{Flu} and Bandle-Brillard-Flucher \cite{BaBrFl} in regard to
this concept of Euclidean type and its applications in geometric
potential analysis -- and yet, it is perhaps appropriate to mention
one more fact that the quantity $R_\Omega(x)^{2-n}$ is the leading
term of the regular part of the Green function $G_\Omega(x,y)$ and
when $\Omega$ approaches $M^n$, the resulting harmonic radius at
$x$:
$$
\lim_{y\to x}\Big(d(x,y)^{2-n}-n(n-2)\omega_n
G(x,y)\Big)^\frac{1}{2-n}
$$
can be modified for the Green function of the Yamabe operator on a
compact Riemannian manifold in order to properly construct a
canonical metric on a locally conformally flat Riemannian manifold
-- see Habermann-Jost \cite{HaJo} (or Habermann \cite{Ha}) where the
construction of the metric deepens Hersch's \cite{Her} and
Leutwiler's \cite{Leu} and relies on Schoen-Yau's positive mass
theorem in \cite{SchYau1} with connection to Schoen's final solution
to Yamabe problem \cite{Sch0}. Needless to say, the endpoint
$n/(n-2)$ is crucial -- see also Schoen-Yau's paper \cite{SchYau}
for an account on the crucial role of such an exponent/index playing
in the integrability of the minimal Green's function for the
conformally invariant Laplace operator.

The proofs of the above-mentioned theorems (mixing Riemannian
geometry and integration theory of Green's functions) will be well
detailed except those already-known parts in Theorems \ref{thm1} and
\ref{thm2} (whose references will surely be located). Our ideas,
techniques and methods determining the sharp constants are
adjustable for settling the similar problems on the Green function
of the Yamabe operator -- this will be an object of our future
investigation.

We would like to take this opportunity to acknowledge several
communications on certain inequalities in this paper with: D. R.
Adams, C. Bandle, H. Brunner, W. Chen, P. Li, C. Xia, D. Yang, Z.
Zhai and G. Zhang, in alphabetical order.

\section{Coarse Estimates}

It is well-known that geometry of the rough $L^1$-Sobolev inequality
is completely characterized in terms of the isoperimetric inequality
with a rough constant -- that is to say -- the classic Sobolev
inequality
$$
\left(\int_{M^n}|f|^\frac{n}{n-1}\,dV\right)^\frac{n-1}{n}\lesssim \int_{M^n}|\nabla f|\,dV\quad\hbox{for\ all\ functions}\ f\in C_0^\infty(M^n),
$$
amounts to the generic isoperimetric inequality
$$
V(\Omega)^\frac{n-1}{n}\lesssim S(\partial\Omega)\quad\hbox{for\ all\ smooth\ bounded\ domains}\ \Omega\subseteq M^n.
$$
Here and henceforth, $S(\partial\Omega)$ is the area of the boundary
$\partial\Omega$ of $\Omega\subseteq M^n$.

While working on some analytic and geometric forms of $L^2$-Sobolev
inequality, -Nash inequality, and -logarithmic-Sobolev inequality
without optimal constant, we get a collection of the most-known
equivalences:

\begin{theorem}\label{thm1} Let $(M^n,ds^2)$, $n\ge 3$, be non-compact
complete boundary-free Riemannian manifold. Then the following eight
statements with coarse constants are equivalent:

\item{\rm(i)} $L^2$ Sobolev inequality
$$
\left(\int_{M^n}|f|^\frac{2n}{n-2}\,dV\right)^\frac{n-2}{n}\lesssim\int_{M^n}|\nabla
f|^2\,dV
$$
holds for all $f\in C^\infty_0(M^n)$.

\item{\rm(ii)} Nash's inequality
$$
\Big(\int_{M^n}|f|^2\,dV\Big)^{1+\frac{2}{n}}\lesssim\Big(\int_{M^n}|f|\,dV\Big)^\frac4n\int_{M^n}|\nabla
f|^2\,dV
$$
holds for all $f\in C^\infty_0(M^n)$.

\item{\rm(iii)} $L^2$ logarithmic Sobolev inequality
$$
\exp\left(\frac2n\int_{M^n}|f|^2\log |f|^2\,dV\right)\lesssim\int_{M^n}|\nabla f|^2\,dV
$$
holds for all $f\in C^\infty_0(M^n)$ with $\int_{M^n}|f|^2\,dV=1$.

\item{\rm(iv)} Faber-Krahn's eigenvalue inequality
$$
{\lambda_1(\Omega)}^{-1}=\sup_{f\in C^\infty_0(\Omega),\,
f\not\equiv 0}\frac{\int_\Omega |f|^2\,dV}{\int_\Omega|\nabla
f|^2\,dV}\lesssim V(\Omega)^{\frac2n}
$$
holds for all regular bounded open sets $\Omega\subseteq M^n$.

\item{\rm(v)} Maz'ya's iso-capacitary inequality
$$
V(\Omega)^{\frac{n-2}{n}}\lesssim cap_2(\Omega)
$$
holds for all pre-compact open sets $\Omega\subseteq M^n$.

\item{\rm(vi)} On-diagonal upper bound of heat kernel
$$
\sup_{x\in M^n}p_t(x,x)\lesssim t^{-\frac{n}{2}}
$$
holds for all $t>0$.

\item{\rm(vii)} Off-diagonal upper bound of heat kernel
$$
\sup_{(x,y)\in M^n\times M^n}p_t(x,y)\lesssim t^{-\frac{n}{2}}
$$
holds for all $t>0$.

\item{\rm(viii)} Lorentz-Green's global inequality
$$
\sup_{x\in M^n}V\big(\{y\in M^n:\ G(x,y)\ge t\}\big)\lesssim
t^{-\frac{n}{n-2}}
$$
holds for all $t>0$, with $G(x,y)$ being finite for $y\not=x$ (i.e.,
$M^n$ being non-parabolic).

\end{theorem}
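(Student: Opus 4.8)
The plan is to run one cyclic chain of implications,
$$\mathrm{(i)}\Rightarrow\mathrm{(ii)}\Rightarrow\mathrm{(vi)}\Leftrightarrow\mathrm{(vii)}\Rightarrow\mathrm{(viii)}\Rightarrow\mathrm{(v)}\Rightarrow\mathrm{(i)},$$
and to hook the two remaining statements onto it through the side arrows $\mathrm{(i)}\Rightarrow\mathrm{(iii)}\Rightarrow\mathrm{(vi)}$ and $\mathrm{(i)}\Rightarrow\mathrm{(iv)}\Rightarrow\mathrm{(vi)}$; since $\mathrm{(vi)}\Leftrightarrow\mathrm{(vii)}$ sits on the cycle, every statement then becomes equivalent to the $L^2$-Sobolev inequality $\mathrm{(i)}$. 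Almost all of these links are classical and I would simply quote them, the equivalences among $\mathrm{(i)}$, $\mathrm{(ii)}$, $\mathrm{(iii)}$, $\mathrm{(vi)}$, $\mathrm{(vii)}$ going back to Nash, Varopoulos \cite{Va}, Carlen--Kusuoka--Stroock and Davies--Safarov \cite{DaSa}, the equivalence $\mathrm{(i)}\Leftrightarrow\mathrm{(v)}$ being Maz'ya's iso-capacitary characterization of Sobolev inequalities, and $\mathrm{(iv)}\Leftrightarrow\mathrm{(vi)}$ being Grigor'yan's Faber--Krahn criterion for on-diagonal heat kernel bounds; the only link that deserves to be written out is the pair $\mathrm{(vii)}\Rightarrow\mathrm{(viii)}\Rightarrow\mathrm{(v)}$ involving the Lorentz--Green global inequality.

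For the analytic members I would recall the following. $\mathrm{(i)}\Rightarrow\mathrm{(ii)}$: interpolate $\|f\|_2$ by H\"older between $\|f\|_1$ and $\|f\|_{2n/(n-2)}$ and then invoke $\mathrm{(i)}$. $\mathrm{(ii)}\Rightarrow\mathrm{(vi)}$: apply Nash's differential-inequality argument to $E(t):=\int_{M^n}p_t(x,\cdot)^2\,dV=p_{2t}(x,x)$, using $-E'(t)=2\int_{M^n}|\nabla p_t(x,\cdot)|^2\,dV$ and the sub-Markov bound $\|p_t(x,\cdot)\|_1\le1$, to obtain $E(t)\lesssim t^{-n/2}$. $\mathrm{(vi)}\Leftrightarrow\mathrm{(vii)}$: the semigroup identity $p_{2t}(x,y)=\int_{M^n}p_t(x,z)p_t(z,y)\,dV(z)$ together with Cauchy--Schwarz gives $p_{2t}(x,y)\le p_{2t}(x,x)^{1/2}p_{2t}(y,y)^{1/2}$. $\mathrm{(i)}\Rightarrow\mathrm{(iii)}$: Jensen's inequality for the probability measure $|f|^2\,dV$ (with exponent $\tfrac{2n}{n-2}$) gives $\tfrac2n\int_{M^n}|f|^2\log|f|^2\,dV\le\log\|f\|_{2n/(n-2)}^2$, and then one applies $\mathrm{(i)}$. $\mathrm{(iii)}\Rightarrow\mathrm{(vi)}$: the scale-invariant logarithmic Sobolev inequality yields, by the tangent-line bound for $\log$, a one-parameter family of defective log-Sobolev inequalities with the sharp logarithmic profile, and the Davies--Simon ultracontractivity criterion then delivers $\sup_x p_t(x,x)\lesssim t^{-n/2}$. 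On the geometric side: $\mathrm{(i)}\Rightarrow\mathrm{(iv)}$ is immediate, for $f\in C_0^\infty(\Omega)$ one has $\|f\|_2^2\le\|f\|_{2n/(n-2)}^2V(\Omega)^{2/n}\lesssim\|\nabla f\|_2^2V(\Omega)^{2/n}$; $\mathrm{(iv)}\Rightarrow\mathrm{(vi)}$ is Grigor'yan's iteration from a Euclidean-type Faber--Krahn inequality to the $t^{-n/2}$ on-diagonal bound; $\mathrm{(i)}\Rightarrow\mathrm{(v)}$ follows by testing $\mathrm{(i)}$ on an admissible $f\ge1$ on $\Omega$ and using $V(\Omega)^{(n-2)/n}\le\big(\int_\Omega|f|^{2n/(n-2)}\,dV\big)^{(n-2)/n}\le\|f\|_{2n/(n-2)}^2\lesssim\int_{M^n}|\nabla f|^2\,dV$ before passing to the infimum; and $\mathrm{(v)}\Rightarrow\mathrm{(i)}$ is Maz'ya's dyadic-truncation argument, decomposing a test function along its level sets $\{|f|>2^k\}$, applying $\mathrm{(v)}$ on each and summing.

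The link to be detailed is the one through the Lorentz--Green inequality. For $\mathrm{(vii)}\Rightarrow\mathrm{(viii)}$: Riesz--Thorin between $\|e^{t\Delta}\|_{1\to1}\le1$ and $\|e^{t\Delta}\|_{1\to\infty}\lesssim t^{-n/2}$ gives $\|e^{t\Delta}\|_{1\to q}\lesssim t^{-\frac n2(1-1/q)}$, and hence the Green operator $\mathcal Gf(x):=\int_{M^n}G(x,y)f(y)\,dV(y)=\int_0^\infty e^{t\Delta}f\,dt$ maps $L^1(M^n)$ into $L^{n/(n-2),\infty}(M^n)$: for $0\le f$ with $\|f\|_1=1$, splitting $\mathcal G=\int_0^T+\int_T^\infty$, bounding $\big\|\int_T^\infty e^{t\Delta}f\,dt\big\|_\infty\lesssim T^{1-n/2}$ and $\big\|\int_0^T e^{t\Delta}f\,dt\big\|_1\le T$, and choosing $T\approx\lambda^{-2/(n-2)}$ produces $V(\{\mathcal Gf>\lambda\})\lesssim\lambda^{-n/(n-2)}$; taking $f$ to be an approximate identity $f_\varepsilon$ concentrating at $x$ and letting $\varepsilon\to0$ (Fatou) yields $\mathrm{(viii)}$ uniformly in $x$. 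For $\mathrm{(viii)}\Rightarrow\mathrm{(v)}$: the distributional bound in $\mathrm{(viii)}$ upgrades, by the same superposition/Marcinkiewicz mechanism (legitimate because $n/(n-2)>1$), to the boundedness of $\mu\mapsto\int_{M^n}G(\cdot,z)\,d\mu(z)$ from finite positive measures into $L^{n/(n-2),\infty}(M^n)$; applied to the equilibrium measure $\mu_\Omega$ of a pre-compact open $\Omega$ --- whose total mass is $\mu_\Omega(M^n)=cap_2(\Omega)$ and whose potential $u_\Omega=\int_{M^n}G(\cdot,z)\,d\mu_\Omega(z)$ satisfies $u_\Omega\ge1$ on $\Omega$ --- this gives $V(\Omega)\le V(\{u_\Omega\ge1\})\lesssim\big(\mu_\Omega(M^n)\big)^{n/(n-2)}=cap_2(\Omega)^{n/(n-2)}$, i.e. $\mathrm{(v)}$. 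Together with $\mathrm{(v)}\Rightarrow\mathrm{(i)}$ the cycle closes.

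I expect the main obstacle to be precisely these two steps around $\mathrm{(viii)}$: converting the pointwise, distributional Green bound into the operator bound $\mathcal G\colon L^1\to L^{n/(n-2),\infty}$ (a Marcinkiewicz-type superposition argument that genuinely uses $n/(n-2)>1$, since $\mathcal G$ is not $L^\infty$-bounded), and, in the reverse direction, controlling the divergence of $\int_0^\infty t^{-n/2}\,dt$ at both endpoints by the two-sided split together with the limiting passage from $L^1$ data to the Dirac mass $\delta_x$ --- the latter also implicitly invokes the non-parabolicity of $M^n$ (finiteness of $G(x,\cdot)$), which is built into the hypothesis of $\mathrm{(viii)}$ and is in any case forced by any of $\mathrm{(i)}$--$\mathrm{(vii)}$, as the Sobolev/Nash inequality makes $G(x,y)=\int_0^\infty p_t(x,y)\,dt$ finite for $y\neq x$.
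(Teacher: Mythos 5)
Your proposal is correct, and it takes a genuinely different organizational route from the paper's proof, which is essentially a list of citations pairing each equivalence with a reference (Nash, Varopoulos \cite{Va}, Carlen--Kusuoka--Stroock, Davies, Bakry, Carron \cite{Ca}, Grigor'yan \cite{Gr}, Maz'ya \cite{Ma}). You instead run a single cyclic chain $\mathrm{(i)}\Rightarrow\mathrm{(ii)}\Rightarrow\mathrm{(vi)}\Leftrightarrow\mathrm{(vii)}\Rightarrow\mathrm{(viii)}\Rightarrow\mathrm{(v)}\Rightarrow\mathrm{(i)}$ with the side arrows through $\mathrm{(iii)}$ and $\mathrm{(iv)}$, and you sketch the actual arguments rather than merely citing them. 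The most substantive divergence is the treatment of $\mathrm{(viii)}$: the paper attributes the equivalence $\mathrm{(viii)}\Leftrightarrow\mathrm{(iv)}$ to Carron \cite{Ca}, whereas you derive $\mathrm{(vii)}\Rightarrow\mathrm{(viii)}$ directly from $G(x,\cdot)=\int_0^\infty p_t(x,\cdot)\,dt$ by the two-sided split $\int_0^T+\int_T^\infty$ with $T\approx t^{-2/(n-2)}$ and Chebyshev, and then close the cycle with $\mathrm{(viii)}\Rightarrow\mathrm{(v)}$ via the equilibrium potential $u_\Omega=G\mu_\Omega$, $u_\Omega\ge1$ on $\Omega$, $\mu_\Omega(M^n)=cap_2(\Omega)$, together with the normability of $L^{n/(n-2),\infty}$ (valid since $n/(n-2)>1$) to upgrade the pointwise bound on $G(x,\cdot)$ to the superposition bound for $G\mu$. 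That reroute buys a transparent, essentially self-contained derivation of the Lorentz--Green statement at the cost of a small amount of potential theory (existence and properties of the capacitary measure on a non-parabolic manifold, which is standard but which the paper sidesteps by invoking Carron). The remaining sketches --- H\"older interpolation for $\mathrm{(i)}\Rightarrow\mathrm{(ii)}$, Nash's differential inequality for $\mathrm{(ii)}\Rightarrow\mathrm{(vi)}$, Cauchy--Schwarz via the semigroup law for $\mathrm{(vi)}\Leftrightarrow\mathrm{(vii)}$, Jensen for $\mathrm{(i)}\Rightarrow\mathrm{(iii)}$, Davies ultracontractivity for $\mathrm{(iii)}\Rightarrow\mathrm{(vi)}$, Grigor'yan iteration for $\mathrm{(iv)}\Rightarrow\mathrm{(vi)}$, and Maz'ya truncation for $\mathrm{(v)}\Rightarrow\mathrm{(i)}$ --- all match the references the paper invokes and are correct as stated.
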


\begin{proof} A simple proof of the equivalence (i)$\Leftrightarrow$(ii) can be found in \cite{BaCoLeSa} by Bakry, Coulhon, Ledoux and
Saloff-Coste; (i)$\Leftrightarrow$(vi) is due to Varopoulos
\cite{Va}; (ii)$\Leftrightarrow$(vi) is due to
Carlen-Kusuoka-Stroock \cite{CaKuSt}; (iii)$\Leftrightarrow$(vi) is
due to Davies \cite{Da} and Bakry \cite{Ba};
(iv)$\Leftrightarrow$(i)/(vi) is due to Carron \cite{Ca} and
Grigor'yan \cite{Gr} (cf. \cite{Gr1}); (v)$\Leftrightarrow$(i) is
due to Maz'ya \cite{Ma}; (vi)$\Leftrightarrow$(i) is due to Nash
\cite{Na}; (vii)$\Leftrightarrow$(i)/(ii)/(iii) are shown in
Varopoulos \cite{Va}, Carlen-Kusuoka-Stroock \cite{CaKuSt} and Bakry
\cite{Ba}; (viii)$\Leftrightarrow$(iv) is due to Carron \cite{Ca}.
\end{proof}

As a by-product of the foregoing theorem, we find two more new
conditions deciding geometry of $(M^n,ds^2)$.

\begin{theorem}\label{thm1+} Let $(M^n,ds^2)$, $n\ge 3$, be non-compact complete
boundary-free Riemannian manifold. Then anyone from (i) to (viii) in
Theorem \ref{thm1} is equivalent to each of the following two
statements with coarse constants:

\item{\rm(ix)} Lorentz-Green's local inequality
$$
\sup_{x\in\Omega}V\big(\{y\in\Omega:\ G_\Omega(x,y)\ge
t\}\big)\lesssim\big(t+V(\Omega)^\frac{2-n}{n}\big)^\frac{n}{2-n}
$$
holds for all regular bounded open sets $\Omega\subseteq M^n$ and
every $t>0$.

\item{\rm(x)} Green's potential-volume inequality
$$
\sup_{x\in \Omega}\int_{\Omega} G_\Omega(x,y)\,dV\lesssim
V(\Omega)^\frac2n
$$
holds for all regular bounded open sets $\Omega\subseteq M^n$.
\end{theorem}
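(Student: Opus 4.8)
The plan is to run the cycle of implications $\text{(viii)}\Rightarrow\text{(ix)}\Rightarrow\text{(x)}\Rightarrow\text{(iv)}$ and then to invoke Theorem~\ref{thm1}, in which (iv) and (viii) already belong to the common equivalence class of (i)--(viii); this simultaneously yields (ix)$\Leftrightarrow$(x) and their equivalence with any one of (i)--(viii). The first link is soft. Fix a regular bounded open $\Omega\subseteq M^n$, a point $x\in\Omega$, and a number $t>0$, and write $E_t:=\{y\in\Omega:\ G_\Omega(x,y)\ge t\}$. On the one hand $V(E_t)\le V(\Omega)$ trivially; on the other hand the pointwise comparison $G_\Omega(x,y)\le G(x,y)$ --- immediate from $G=\sup_\Omega G_\Omega$ --- puts $E_t$ inside $\{y\in M^n:\ G(x,y)\ge t\}$, so $V(E_t)\lesssim t^{-n/(n-2)}$ by (viii). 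Since $n\ge 3$, the map $s\mapsto s^{n/(2-n)}$ is decreasing on $(0,\infty)$ and $s+s'\le 2\max\{s,s'\}$; combining the two bounds,
\[
V(E_t)\lesssim\min\bigl\{t^{-n/(n-2)},\,V(\Omega)\bigr\}=\bigl(\max\{t,\,V(\Omega)^{(2-n)/n}\}\bigr)^{n/(2-n)}\approx\bigl(t+V(\Omega)^{(2-n)/n}\bigr)^{n/(2-n)},
\]
which is precisely (ix).

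For (ix)$\Rightarrow$(x) I would integrate the distribution function. By the layer-cake formula and (ix),
\[
\int_\Omega G_\Omega(x,y)\,dV(y)=\int_0^\infty V(E_t)\,dt\lesssim\int_0^\infty\bigl(t+V(\Omega)^{(2-n)/n}\bigr)^{n/(2-n)}\,dt,
\]
and, splitting this integral at $t_0:=V(\Omega)^{(2-n)/n}$ --- bounding the integrand by $t_0^{n/(2-n)}=V(\Omega)$ on $(0,t_0)$ and by $t^{n/(2-n)}$ on $(t_0,\infty)$, the latter being integrable because $n/(2-n)<-1$ for $n\ge3$ --- one obtains a bound $\lesssim V(\Omega)\,t_0+t_0^{2/(2-n)}\approx V(\Omega)^{2/n}$, since both summands equal a dimensional constant times $V(\Omega)^{2/n}$. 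Taking the supremum over $x\in\Omega$ gives (x).

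The substance lies in (x)$\Rightarrow$(iv). Let $\phi_1>0$ be a first Dirichlet eigenfunction of $\Delta$ on the pre-compact domain $\Omega$, so $\phi_1\in L^\infty(\Omega)$, $\Delta\phi_1=\lambda_1(\Omega)\phi_1$ and $\phi_1|_{\partial\Omega}=0$; the Green representation $\phi_1(x)=\lambda_1(\Omega)\int_\Omega G_\Omega(x,y)\phi_1(y)\,dV(y)$, after bounding $\phi_1(y)$ by $\|\phi_1\|_{L^\infty(\Omega)}$ under the integral, taking $\sup_{x\in\Omega}$, and cancelling $\|\phi_1\|_{L^\infty(\Omega)}>0$, produces
\[
\lambda_1(\Omega)^{-1}\le\sup_{x\in\Omega}\int_\Omega G_\Omega(x,y)\,dV(y)\lesssim V(\Omega)^{2/n},
\]
the last inequality being (x); this is exactly statement (iv). (One may instead note that the Green operator $Tf:=\int_\Omega G_\Omega(\cdot,y)f(y)\,dV(y)$ has $L^1\!\to\!L^1$ and $L^\infty\!\to\!L^\infty$ operator norms both equal to $\sup_{x\in\Omega}\int_\Omega G_\Omega(x,y)\,dV(y)$, whence by interpolation its $L^2\!\to\!L^2$ norm $\lambda_1(\Omega)^{-1}$ does not exceed that quantity; this uses only the symmetry and nonnegativity of $G_\Omega$.) Now Theorem~\ref{thm1} delivers all of (i)--(viii), in particular (viii), and the cycle closes.

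I expect the only genuine friction to be in (x)$\Rightarrow$(iv): justifying the Green representation formula and the boundedness and positivity of $\phi_1$ on a merely ``regular'' bounded domain --- routine via elliptic estimates on the compact set $\overline{\Omega}$, but worth stating carefully, and in any event bypassed by the $L^1$--$L^\infty$ interpolation variant above. The other two links are bookkeeping with the distribution function, and the constants are not optimized here (the sharp versions appear later under the Euclidean isoperimetric hypothesis).
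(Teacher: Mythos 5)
Your proposal is correct and follows essentially the same route as the paper: the cycle $\text{(viii)}\Rightarrow\text{(ix)}\Rightarrow\text{(x)}\Rightarrow\text{(iv)}$ closed by Theorem \ref{thm1}, with the same domination $G_\Omega\le G$, the same layer-cake integration, and the same first-eigenfunction/Green-representation argument for $\text{(x)}\Rightarrow\text{(iv)}$. Your interpolation (Schur-test) variant for the last step is a minor but legitimate alternative to the paper's pointwise eigenfunction bound.
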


\begin{proof} Since (iv)$\Leftrightarrow$(viii) above is valid, it
suffices to check
(viii)$\Rightarrow$(ix)$\Rightarrow$(x)$\Rightarrow$(iv). Suppose
(viii) is true. Note that for any regular bounded open set
$\Omega\subseteq M^n$ with $V(\Omega)<\infty$ we have
$$
y\in\Omega\quad\&\quad G_\Omega(x,y)\ge t\Longrightarrow
y\in\Omega\quad\&\quad G(x,y)\ge t,
$$
whence reaching (ix) via
$$
V\big(\{y\in \Omega:\ G_\Omega(x,y)\ge
t\}\big)\lesssim\min\{V(\Omega), t^{-\frac{n}{n-2}}\}\approx
\big(V(\Omega)^\frac{2-n}{n}+t\big)^\frac{n}{2-n}.
$$
If (ix) is valid, then an integration by parts and a change of
variables yield
\begin{eqnarray*}
\int_{\Omega}G_\Omega(x,y)\,dV(y)&=&
\int_0^\infty V\big(\{y\in\Omega:\ G_\Omega(x,y)\ge t\}\big)\,dt\\
&\lesssim&\int_0^\infty \big(V(\Omega)^\frac{2-n}{n}+t\big)^\frac{n}{2-n}\,dt\\
&\lesssim&\int_0^\infty
t\big(V(\Omega)^\frac{2-n}{n}+t\big)^\frac{2(n-1)}{2-n}\,dt\\
&\approx& V(\Omega)^\frac{2(n-1)}{n}\int_0^\infty
t\big(1+V(\Omega)^\frac{n-2}{n}t\big)^\frac{2(n-1)}{2-n}\,dt\\
&\approx& V(\Omega)^\frac{2}{n}\int_0^\infty
s(1+s)^\frac{2(n-1)}{2-n}\,ds,
\end{eqnarray*}
namely, (x) holds. Now, suppose (x) is valid. Given a regular
bounded open set $\Omega\subseteq M^n$ with $V(\Omega)<\infty$ (the
condition (iv) is trivially valid for $V(\Omega)=\infty$). Assume
that $u\not\equiv 0$ solves
\[
\left\{\begin{array} {r@{\;,\quad}l}
\Delta u(y)=\lambda_1(\Omega)u(y) & y\in\Omega,\\
u(y)=0 & y\in \partial\Omega.
\end{array}
\right.
\]
Then for each $x\in\Omega$ we have
\begin{eqnarray*}
u(x)&=&\int_\Omega G_\Omega(x,y)\Delta u(y)\,dV(y)\\
&=&\lambda_1(\Omega)\int_\Omega G_\Omega(x,y)u(y)\,dV(y)\\
&\le&\lambda_1(\Omega)\big(\sup_{y\in\Omega}u(y)\big)\int_\Omega G_\Omega(x,y)\,dV(y),\\
\end{eqnarray*}
thereby deriving
$$
1\le\lambda_1(\Omega)\sup_{x\in\Omega}\int_\Omega
G_\Omega(x,y)\,dV(y)\lesssim\lambda_1(\Omega)V(\Omega)^\frac{2}{n}.
$$
In other words, (iv) is true.
\end{proof}

Here it is worth observing that if $(M^n,ds^2)$ allows the
coarse/generic isoperimetric inequality above to exist then (vi),
(vii) and (viii) hold -- see also \cite{Ch}, \cite{ChFe1},
\cite{ChFe2}, and \cite{Gr2}, and hence the remaining conditions in
Theorems \ref{thm1} \& \ref{thm1+} hold. Evidently, the
previously-described conditions (iv)-(v)-(vi)-(vii)-(viii)-(ix)-(x)
may be regarded as seven different but equivalent geometric criteria
(involving no curvature hypotheses) for three different but
equivalent inequalities (i)-(ii)-(iii) above to hold. Since the
generic $L^1$ Sobolev inequality is strictly stronger than the
generic $L^2$ Sobolev inequality, the isoperimetric inequality
without best constant implies all the seven geometric inequalities
but not conversely in general -- nevertheless the implication can be
reversed when $(M^n,ds^2)$ has a nonnegative Ricci curvature -- see
also \cite[Lemma 8.1 \& Theorem 8.4]{Heb} as well as \cite{Ca},
\cite{CoLe}, \cite{Va}. Here it is also worth noticing that from Yau
\cite{Yau}, Varopoulos \cite{Va} and J. Li \cite{LiJ} we see that if
$(M^n,ds^2)$ has non-negative Ricci curvature then the generic $L^p$
($1\le p<n$) Sobolev inequality is true on $M^n$ when and only when
$V\big(B_r(x)\big)\gtrsim r^n$. Furthermore, Theorem \ref{thm1+} has
actually motivated us to establish the following rough comparison
principle for Green's function integrals which is of independent
interest.

\begin{theorem}\label{cor1} Let $(M^n,ds^2)$, $n\ge 3$, be a non-compact
complete boundary-free Riemannian manifold with anyone of Theorem
\ref{thm1} (i) through Theorem \ref{thm1+} (x) holding. If $0\le
q<p<\frac{n}{n-2}$ then the following comparison inequality with
coarse constant
$$
\left(\int_{\Omega}G_\Omega(x,y)^p\,dV(y)\right)^\frac{n}{n-p(n-2)}\lesssim
\left(\int_\Omega G_\Omega(x,y)^q\,dV(y)\right)^\frac{n}{n-q(n-2)}
$$
holds for all regular bounded open sets $\Omega\subseteq M^n$
containing $x$. Moreover, $G_\Omega(x,\cdot)$ belongs uniformly to
the Lorentz space $L^\ast_{\frac{n}{n-2}}(\Omega)$, namely,
$$
\sup_{(t,x)\in
(0,\infty)\times\Omega}t^\frac{n}{n-2}V\big(\{y\in\Omega:\
G_\Omega(x,y)\ge t\}\big)<\infty.
$$
\end{theorem}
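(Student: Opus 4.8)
The plan is to isolate the single quantitative input the hypotheses supply --- a uniform weak-type ($L^\ast_{\frac{n}{n-2}}$) bound on $G_\Omega(x,\cdot)$ --- and then to interpolate it against the $L^q$-datum by a Marcinkiewicz-type truncation. First I would introduce, for a fixed $x\in\Omega$, the distribution function $\mu(t)=V\big(\{y\in\Omega:\ G_\Omega(x,y)\ge t\}\big)$. Condition (ix) of Theorem \ref{thm1+} gives, for every $t>0$,
$$
\mu(t)\lesssim\big(t+V(\Omega)^\frac{2-n}{n}\big)^\frac{n}{2-n}\le\min\Big\{V(\Omega),\ t^{-\frac{n}{n-2}}\Big\},
$$
since $\frac{n}{2-n}=-\frac{n}{n-2}<0$ while $t+V(\Omega)^{(2-n)/n}$ exceeds each of $t$ and $V(\Omega)^{(2-n)/n}$; equivalently one may start from $G_\Omega(x,\cdot)\le G(x,\cdot)$ (immediate from $G=\sup_\Omega G_\Omega$) and the global bound (viii). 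In particular
$$
\sup_{(t,x)\in(0,\infty)\times\Omega}t^\frac{n}{n-2}V\big(\{y\in\Omega:\ G_\Omega(x,y)\ge t\}\big)<\infty,
$$
which is exactly the asserted uniform membership of $G_\Omega(x,\cdot)$ in $L^\ast_{\frac{n}{n-2}}(\Omega)$, so the second claim is already done.

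For the comparison inequality I would set $\alpha=\frac{n}{n-2}$ (so $0\le q<p<\alpha$) and record the identities $n-r(n-2)=(n-2)(\alpha-r)$ and $\alpha(n-2)=n$, so that the target reads $I_p^{\,n/((n-2)(\alpha-p))}\lesssim I_q^{\,n/((n-2)(\alpha-q))}$, where, via the layer-cake formula,
$$
I_r:=\int_\Omega G_\Omega(x,y)^r\,dV(y)=\int_0^\infty r\,t^{r-1}\mu(t)\,dt\quad(r>0),\qquad I_0:=V(\Omega).
$$
The two bounds on $\mu$ give $0<I_r\le V(\Omega)+\frac{rC}{\alpha-r}<\infty$ for $0<r<\alpha$, so no quantity below is degenerate. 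Splitting $I_p$ at a free level $T>0$, on $(0,T]$ I would use $t^{p-1}\mu(t)\le T^{p-q}t^{q-1}\mu(t)$ when $q>0$ (and $\mu(t)\le V(\Omega)$ when $q=0$), and on $(T,\infty)$ the bound $\mu(t)\lesssim t^{-\alpha}$, whose $t$-integral converges because $p<\alpha$; this yields
$$
I_p\lesssim T^{p-q}I_q+T^{p-\alpha}\quad(q>0),\qquad I_p\lesssim V(\Omega)\,T^{p}+T^{p-\alpha}\quad(q=0),
$$
with implied constants depending only on $n,p,q$ and the constant in (ix). Choosing $T=I_q^{-1/(\alpha-q)}$ (resp. $T=V(\Omega)^{-1/\alpha}$) to balance the two terms gives $I_p\lesssim I_q^{(\alpha-p)/(\alpha-q)}$ (resp. $I_p\lesssim V(\Omega)^{(\alpha-p)/\alpha}$); raising to the power $n/((n-2)(\alpha-p))$ and using $\alpha(n-2)=n$ then produces exactly the claimed comparison, the case $q=0$ becoming $I_p^{\,n/((n-2)(\alpha-p))}\lesssim V(\Omega)=I_0$.

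I do not anticipate a real obstacle: all the substance sits in the weak-type bound, which (ix) (or (viii)) hands us with a constant uniform in $x$ and $\Omega$, and the rest is the standard split-and-optimize interpolation between $L^q$ and $L^\ast_{\frac{n}{n-2}}$. The only points demanding care are the exponent bookkeeping through $n-r(n-2)=(n-2)(\alpha-r)$ and $\alpha(n-2)=n$; running the endpoint $q=0$ (where $V(\Omega)$ plays the role of $\int_\Omega G_\Omega(x,\cdot)^0\,dV$) in parallel with $q>0$; and checking that a relatively compact regular $\Omega$ has $V(\Omega)<\infty$, which keeps $I_r$ and the chosen $T$ finite and positive.
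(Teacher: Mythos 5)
Your proof is correct, and the second claim (uniform Lorentz membership) is handled exactly as the paper does -- it is an immediate consequence of (ix) (equivalently (viii) via $G_\Omega\le G$). For the comparison inequality your route, while resting on the same weak-type input, differs genuinely from the paper's in Case $q>0$. You run a textbook Marcinkiewicz truncation: split $I_p=p\int_0^\infty t^{p-1}\mu(t)\,dt$ at a free level $T$, control the low part by $T^{p-q}I_q$ (since $t^{p-1}\le T^{p-q}t^{q-1}$ on $(0,T]$), control the high tail by the weak-type bound $\mu(t)\lesssim t^{-n/(n-2)}$, and optimize $T\sim I_q^{-1/(\alpha-q)}$ to get $I_p\lesssim I_q^{(\alpha-p)/(\alpha-q)}$, from which the stated inequality follows upon raising to the power $n/((n-2)(\alpha-p))$. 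The paper instead first reorganizes $I_p$ as $\frac{p}{q}\int_0^\infty(t^p\mu(t))^{(p-q)/p}\mu(t)^{q/p}\,dt^q$, uses the Chebyshev bound $t^p\mu(t)\le I_p$ to pull out a factor $I_p^{(p-q)/p}$ and arrive at $I_p^{q/p}\lesssim\int_0^\infty\mu(r)^{q/p}\,dr^q$, and only then does the two-sided split (Chebyshev against $I_q$ for $r$ small, weak-type for $r$ large) followed by optimization. The two calculations produce the identical exponent, but your version is shorter, avoids the intermediate self-referential bound on $I_p^{q/p}$, and makes the role of the weak-type norm as the interpolating endpoint more transparent. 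The $q=0$ case you treat in parallel with $T=V(\Omega)^{-1/\alpha}$ matches the paper's Case 1 after the change of variables. The only bookkeeping worth keeping visible in a writeup is the identity $n-r(n-2)=(n-2)(\alpha-r)$ you already record, together with the finiteness and positivity of $I_q$ (guaranteed by (ix) and by $G_\Omega(x,\cdot)>0$ near $x$), so that the optimizing choice of $T$ is a genuine positive number.
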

\begin{proof} Due to the conditions from (i) to (x) in Theorems \ref{thm1} and
\ref{thm1+} are all equivalent, we may use any of them in what
follows.

Suppose $\int_\Omega G_\Omega(x,y)^q\,dV(y)$ is finite -- otherwise
there is nothing to argue.

Case 1: $q=0$. This ensures $V(\Omega)<\infty$. Using Theorem
\ref{thm1+} (ix), we obtain
\begin{eqnarray*}
\int_\Omega G_\Omega(x,y)^p\,dV(y)&=&\int_0^\infty
V\big(\{y\in\Omega:\ G_\Omega(x,y)\ge t\}\big)\,dt^p\\
&\lesssim&\int_0^\infty\big(t+V(\Omega)^\frac{2-n}{n}\big)^\frac{n}{2-n}\,dt^p\\
&\lesssim& V(\Omega)^\frac{n-p(n-2)}{n}\int_0^\infty
s^{p-1}(1+s)^\frac{n}{2-n}\,ds,
\end{eqnarray*}
as desired.

Case 2: $q>0$. Note that
$$
\int_{\Omega}G_\Omega(x,y)^p\,dV(y)=\frac{p}{q}\int_0^\infty
\frac{\Big(t^pV\big(\{y\in\Omega:\ G_\Omega(x,y)\ge
t\}\big)\Big)^\frac{p-q}{p}}{\Big(V\big(\{y\in\Omega:\
G_\Omega(x,y)\ge t\}\big)\Big)^{-\frac{q}{p}}}\,dt^q
$$
and for $\alpha=p,q$ and $r>0$,
$$
\int_0^\infty V\big(\{y\in\Omega:\ G_\Omega(x,y)\ge t\}\big)\,
dt^\alpha\ge r^\alpha V\big(\{y\in\Omega:\ G_\Omega(x,y)\ge
r\}\big).
$$
So
$$
\int_{\Omega}G_\Omega(x,y)^p\,dV(y)\le\frac{p}{q}\int_0^\infty
\frac{\Big(V\big(\{y\in\Omega:\ G_\Omega(x,y)\ge
r\}\big)\Big)^\frac{q}{p}}{\Big(\int_{\Omega}G_\Omega(x,y)^p\,dV(y)\Big)^\frac{q-p}{p}}\,dr^q.
$$
This yields
$$
\left(\int_{\Omega}G_\Omega(x,y)^p\,dV(y)\right)^\frac{q}{p}\le\frac{p}{q}\int_0^\infty\Big(V\big(\{y\in\Omega:\
G_\Omega(x,y)\ge r\}\big)\Big)^\frac{q}{p}\,dr^q.
$$
Furthermore, using Theorem \ref{thm1+} (ix) again we obtain
\begin{eqnarray*}
\int_0^\infty\Big(V\big(\{y\in\Omega:\ G_\Omega(x,y)\ge
r\}\big)\Big)^\frac{q}{p}\,dr^q&\lesssim&\int_0^t\Big(r^{-q}\int_\Omega
G_\Omega(x,y)^q\,dV(y)\Big)^\frac{q}{p}\,dr^q\\
&&+\int_t^\infty
r^\frac{qn}{p(2-n)}\,dr^q\\
&\approx&\left(\int_\Omega
G_\Omega(x,y)^q\,dV(y)\right)^\frac{q}{p}t^{q(1-\frac{q}{p})}\\
&&+\ t^{\frac{q(n-p(n-2))}{p(2-n)}}.
\end{eqnarray*}
If
$$
t=\left(\int_\Omega
G_\Omega(x,y)^q\,dV(y)\right)^\frac{n-2}{q(n-2)-n}
$$
then under $0<q<p<\frac{n}{n-2}$,
$$
\int_0^\infty\Big(V\big(\{y\in\Omega:\ G_\Omega(x,y)\ge
r\}\big)\Big)^\frac{q}{p}\,dr^q\lesssim\left(\int_\Omega
G_\Omega(x,y)^q\,dV(y)\right)^{\frac{1-\frac{n}{p(n-2)}}{1-\frac{n}{q(n-2)}}}
$$
and hence
$$
\int_\Omega G_\Omega(x,y)^p\,dV(y)\lesssim\left(\int_\Omega
G_\Omega(x,y)^q\,dV(y)\right)^{\frac{p-\frac{n}{n-2}}{q-\frac{n}{n-2}}}.
$$

Clearly, the second fact stated in Theorem \ref{cor1} follows
immediately from Theorem \ref{thm1+} (ix).
\end{proof}

\begin{remark}\label{rem1} Under the same hypothesis as in Theorem \ref{cor1}, we have that if $\frac{n}{2-n}<p<0$ then $0<-p<\frac{n}{n-2}$
and hence the Cauchy-Schwarz inequality and Theorem \ref{cor1} yield
\begin{eqnarray*}
V(\Omega)&\le&\Big(\int_\Omega
G_\Omega(x,y)^p\,dV(y)\Big)^\frac12\Big(\int_\Omega
G_\Omega(x,y)^{-p}\,dV(y)\Big)^\frac12\\
&\lesssim&\Big(\int_\Omega G_\Omega(x,y)^p\,dV(y)\Big)^\frac12
V(\Omega)^\frac{n+p(n-2)}{2n},
\end{eqnarray*}
namely,
$$
V(\Omega)^\frac{n-p(n-2)}{n}\lesssim\int_\Omega
G_\Omega(x,y)^p\,dV(y).
$$
Clearly, this last estimate can be regarded as a kind of the
isoperimetric inequality involving the Green function. Moreover, for
$0\le p<\frac{n}{n-2}$ one has the following area-volume-type
isoperimetric estimate:
$$
\left(\int_\Omega
G_\Omega(x,y)^p\,dV(y)\right)^\frac{n-1}{n-p(n-2)}\lesssim
S(\partial\Omega),
$$
upon the coarse isoperimetric inequality being valid for all regular
bounded domains $\Omega\subseteq M^n$ containing $x$.
\end{remark}

\section{Precise Estimates}

In order to find out the sharp versions of the estimates established
in Theorems \ref{thm1}-\ref{thm1+}-\ref{cor1}, we need to review the
celebrated result (due to Federer-Fleming \cite{FeFl} and Maz'ya
\cite{MaAd} for $M^n=\mathbb R^n$) that the sharp $L^1$-Sobolev
inequality
$$
\left(\int_{M^n}|f|^\frac{n}{n-1}\,dV\right)^\frac{n-1}{n}\le
(n\omega_n^\frac1n)^{-1}\int_{M^n}|\nabla f|\,dV\quad\hbox{for\ all\
functions}\ f\in C_0^\infty(M^n),
$$
is equivalent to the optimal isoperimetric inequality of Euclidean
type
$$
V(\Omega)^\frac{n-1}{n}\le
(n\omega_n^\frac1n)^{-1}S(\partial\Omega)\quad\hbox{for\ all\
smooth\ bounded\ domains}\ \Omega\subseteq M^n.
$$
Moreover, according to Hebey \cite[page 244]{Heb} or Ledoux
\cite{Led} we see that if $(M^n,ds^2)$ has nonnegative Ricci
curvature then the just-mentioned isoperimetric inequality is valid
only when $M^n$ is isometric to $\mathbb R^n$. So, when studying
sharp geometric forms of the $L^2$-Sobolev inequality, -Nash
inequality, and -logarithmic Sobolev inequality of Euclidean type,
we may naturally derive the following result which is partially
known.

\begin{theorem}\label{thm2} Let $(M^n,ds^2)$, $n\ge 3$, be a non-compact
complete boundary-free Riemannian manifold with the sharp
isoperimetric inequality of Euclidean type. Then the following
sharp:

\item{\rm(i)} $L^2$ Sobolev inequality
$$
\left(\int_{M^n}|f|^\frac{2n}{n-2}\,dV\right)^\frac{n-2}{n}\le\big(n(n-2)\big)^{-1}\left(\frac{\Gamma(n)}{\Gamma(\frac{n}2)\Gamma(1+\frac{n}2)\omega_n}\right)^\frac2n
\int_{M^n}|\nabla f|^2\,dV
$$
holds for all $f\in C^\infty_0(M^n)$, where $\Gamma(\cdot)$ is the
standard Gamma function.

\item{\rm(ii)} Nash's inequality
$$
\Big(\int_{M^n}|f|^2\,dV\Big)^{1+\frac{2}{n}}\le\frac{(n+2)^\frac{n+2}{n}}{n(2\omega_n)^\frac2n\lambda_N}\Big(\int_{M^n}|f|\,dV\Big)^\frac4n\int_{M^n}|\nabla
f|^2\,dV
$$
holds for all $f\in C^\infty_0(M^n)$, where $\lambda_N$ is the first
non-zero Neumann eigenvalue of $\Delta$ on radial functions on the
unit ball of $\mathbb R^n$.

\item{\rm(iii)} $L^2$ logarithmic Sobolev inequality
$$
\exp\Big(\frac2n\int_{M^n}|f|^2\log |f|^2\,dV\Big)\le\frac{2}{en\pi}\int_{M^n}|\nabla f|^2\,dV
$$
holds for all $f\in C^\infty_0(M^n)$ with $\int_{M^n}|f|^2\,dV=1$.

\item{\rm(iv)} Faber-Krahn's eigenvalue inequality
$$
\inf_{D\subseteq\mathbb
R^n}\lambda_{1,e}(D)V_e(D)^\frac2n\le{\lambda_1(\Omega)} V(\Omega)^{\frac2n}
$$
holds for all regular bounded open sets $\Omega\subseteq M^n$, where
the infimum ranges over all regular bounded open sets
$D\subseteq\mathbb R^n$ and $\lambda_{1,e}(D)$ and $V_e(D)$ are
respectively the eigenvalue and volume associated with $D$ under the
Euclidean metric.

\item{\rm(v)} Maz'ya's iso-capacitary inequality
$$
V(\Omega)^{\frac{n-2}{n}}\le\big(n(n-2)\omega_n^\frac2n\big)^{-1}
cap_2(\Omega)
$$
holds for all pre-compact open sets $\Omega\subseteq M^n$.

\item{\rm(vi)} On-diagonal bound of heat kernel
$$
\sup_{x\in M^n}p_t(x,x)\le(4\pi t)^{-\frac{n}{2}}
$$
holds for all $t>0$.

\item{\rm(vii)} Off-diagonal bound of heat kernel
$$
\sup_{(x,y)\in M^n\times M^n}p_t(x,y)\le(4\pi t)^{-\frac{n}{2}}
$$
holds for all $t>0$.

\item{\rm(viii)} Lorentz-Green's global inequality
$$
\sup_{x\in M^n}V\big(\{y\in M^n:\ G(x,y)\ge
t\}\big)\le\big(n(n-2)\omega_n^\frac2n\big)^{-\frac{n}{n-2}}
t^{-\frac{n}{n-2}}
$$
holds for all $t>0$, with $G(x,y)$ being finite for $y\not=x$ (i.e.,
$M^n$ being non-parabolic).

On the other hand, if anyone of (i), (ii), (iii), (v), (vi), (vii),
((iv), (viii)) is true and $(M^n,ds^2)$ has nonnegative Ricci
curvature then $M^n$ is isometric (diffeomorphic) to $\mathbb R^n$.
\end{theorem}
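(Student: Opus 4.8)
The plan is to obtain the eight sharp estimates by transplanting the optimal isoperimetric inequality of Euclidean type along two independent routes, and then to get the rigidity half by showing that each sharp inequality forces Euclidean volume growth, which under $Ric\ge 0$ is incompatible with the Bishop--Gromov comparison unless $M^n=\mathbb R^n$.

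\emph{Forward direction.} Recall (Federer--Fleming \cite{FeFl}, Maz'ya \cite{MaAd}) that the hypothesis is equivalent to the sharp $L^1$-Sobolev inequality stated above. I would first derive (v) from it by Maz'ya's truncation: for admissible $f$ (so $f\in C^\infty_0(M^n)$ with $f\ge1$ on $\Omega$) and $\mu(t)=V(\{f>t\})$, the coarea formula gives $\int_{M^n}|\nabla f|^2\,dV=\int_0^\infty(\int_{\{f=t\}}|\nabla f|\,d\mathcal{H}^{n-1})\,dt$, and Cauchy--Schwarz on the level set together with $-\mu'(t)=\int_{\{f=t\}}|\nabla f|^{-1}\,d\mathcal{H}^{n-1}$ and $S(\{f=t\})\ge n\omega_n^{1/n}\mu(t)^{(n-1)/n}$ bound this below by $(n\omega_n^{1/n})^2\int_0^\infty\mu(t)^{2(n-1)/n}(-\mu'(t))^{-1}\,dt$; a second Cauchy--Schwarz in $t$ over $[0,1)$ gives $n(n-2)\omega_n^{2/n}V(\Omega)^{(n-2)/n}$, whence (v) follows on taking the infimum over $f$. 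For (i), (ii), (iii), (iv) I would use that the hypothesis yields, via the same coarea--plus--Cauchy--Schwarz computation, the Polya--Szego inequality $\int_{\mathbb R^n}|\nabla f^\ast|^2\le\int_{M^n}|\nabla f|^2$ for the equimeasurable radial decreasing rearrangement $f^\ast$ of $f$ on $\mathbb R^n$; since $f^\ast$ shares every $L^q$-norm and the quantity $\int|f^\ast|^2\log|f^\ast|^2$ with $f$, and since the extremals of the Aubin--Talenti $L^2$-Sobolev inequality, of the Carlen--Loss sharp Nash inequality, of the Euclidean logarithmic Sobolev inequality, and of the classical Faber--Krahn inequality are all radial, each sharp Euclidean inequality transplants to $M^n$ with exactly the stated constant (one checks $\Gamma(1+\tfrac{n}{2})\omega_n=\pi^{n/2}$ to see that the constant in (i) is the Aubin--Talenti constant). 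Then (vi) and (vii) follow from (iii) (equivalently from (i) or (ii)) by the Davies--Gross semigroup argument: the Euclidean-form logarithmic Sobolev inequality is equivalent to the ultracontractive estimate $\sup_{x,y}p_t(x,y)=\|e^{-t\Delta}\|_{1\to\infty}\le(4\pi t)^{-n/2}$, which contains the on-diagonal bound. Finally (viii) follows from (v) via the identity $cap_2(\{y:G(x,y)>t\})=1/t$: the capacitary potential of $\{G(x,\cdot)>t\}$ equals $\min\{G(x,\cdot)/t,1\}$, so integrating $|\nabla(G/t)|^2$ by parts over $\{G<t\}$ against $\Delta G=\delta_x$ gives $t^{-2}\cdot t$, and substituting this into (v) with $\Omega=\{G(x,\cdot)>t\}$ reproduces the bound in (viii). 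The pieces already in the literature (the Polya--Szego transplantation and (vi)/(vii)) are located in Ledoux \cite{Led} and Hebey \cite{Heb}.

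\emph{Rigidity direction.} Suppose $Ric\ge0$ and one of (i)--(viii) holds. The Bishop--Gromov comparison gives $V(B_r(x))\le\omega_n r^n$ with the ratio $V(B_r(x))/(\omega_n r^n)$ non-increasing in $r$ and $\to1$ as $r\to0$, so it is enough to prove the reverse bound $V(B_r(x))\ge\omega_n r^n$ (for all $x$, and for all $r$, or at least for $r$ in an interval accumulating at $0$ or at $\infty$); then the volume ratio is identically $1$ and the rigidity case of the comparison gives that $(M^n,ds^2)$ is isometric to $\mathbb R^n$. For (iv) I would feed $\Omega=B_r(x)$ into the sharp Faber--Krahn inequality, observe that its left-hand side equals $j_{n/2-1}^2\,\omega_n^{2/n}$ by the Euclidean Faber--Krahn theorem, and combine with Cheng's eigenvalue comparison $\lambda_1(B_r(x))\le\lambda_1(B_r^e)=j_{n/2-1}^2/r^2$ to force $V(B_r(x))\ge\omega_n r^n$ (the whole chain being then forced to equality also gives, through Cheng's rigidity, the metric conclusion). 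For (v), compare $cap_2(B_r(x))$ with the Euclidean value $n(n-2)\omega_n r^{n-2}$ in the same spirit. For (vi) and (vii), use the sharp large-time heat-kernel asymptotics under $Ric\ge0$ (P.\ Li), $\lim_{t\to\infty}(4\pi t)^{n/2}p_t(x,x)=(\lim_{r\to\infty}V(B_r(x))/\omega_n r^n)^{-1}$, against $p_t(x,x)\le(4\pi t)^{-n/2}$ to force the asymptotic volume ratio to be $\ge1$. For (viii), use the sharp large-distance asymptotics of the minimal Green function under $Ric\ge0$ (Li--Yau, Colding--Minicozzi), $\lim_{d(x,y)\to\infty}d(x,y)^{n-2}G(x,y)=(n(n-2)\omega_n\theta_\infty)^{-1}$ with $\theta_\infty=\lim_{r\to\infty}V(B_r(x))/\omega_n r^n$, which makes the left side of (viii) behave like $\theta_\infty^{-2/(n-2)}$ times its right side as $t\to0^+$ and hence forces $\theta_\infty\ge1$. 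For (i), (ii), (iii) I would either pass through (vi)/(vii) with the Euclidean constant, or argue directly as in Ledoux \cite{Led} using a one-parameter family of truncated extremal ("bubble") test functions whose Rayleigh-type quotients, controlled through Bishop--Gromov, again force $V(B_r(x))\ge\omega_n r^n$. In all cases the resulting volume-ratio rigidity yields isometry with $\mathbb R^n$; for (iv) and (viii), where the sharp information is extracted only in a limit, one records the conclusion as a diffeomorphism, obtained from the Cheeger--Colding-type rigidity for manifolds with $Ric\ge0$ and maximal volume growth (and upgraded to an isometry when the limiting volume identity is seen to hold at all scales).

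\emph{Main obstacle.} The forward direction is essentially routine: rearrangement, the semigroup estimate and the capacity identity are classical once the sharp isoperimetric inequality is assumed, the only vigilance being the bookkeeping of the $\Gamma$/$B$/$\omega_n$ constants. The real work is the rigidity half, where one must extract the \emph{exact} Euclidean constant $\omega_n$ --- not merely a comparable volume lower bound --- from each inequality; for (vi), (vii), (viii) this rests on the refined large-time heat-kernel and large-distance Green-function asymptotics under $Ric\ge0$, whose constants have to be matched precisely against those appearing in (vi)--(viii), and on correctly stitching these estimates to the Bishop--Gromov (and, in the case of (iv), Cheng) rigidity. Deciding in each case whether the conclusion is a genuine isometry or only a diffeomorphism is part of that difficulty.
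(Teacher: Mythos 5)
Your proposal is correct in outline and shares the paper's overall strategy (transplant the sharp Euclidean isoperimetric inequality into each functional inequality, then force the Bishop--Gromov volume ratio to equal $1$ under $Ric\ge 0$), but several of your sub-routes genuinely differ from the paper's. In the forward direction the paper obtains (i) not by rearrangement but by first proving (v) (quoting Maz'ya's sharp isocapacitary estimate) and then invoking Maz'ya's capacitary strong-type inequality; it obtains (iii) from Ni's rendering of Perelman's monotonicity argument rather than by symmetrizing into the Euclidean logarithmic Sobolev inequality; and it obtains (viii) by exhausting $M^n$ with regular domains and letting the local Lorentz--Green inequality of Theorem \ref{thm2+}(ix) pass to the limit, whereas you use $cap_2(\{y:G(x,y)>t\})=1/t$ together with (v) --- a cleaner route, modulo the standard approximations needed because the capacitary potential is not in $C_0^\infty$ and the superlevel set need not be precompact (apply (v) to precompact open subsets and use inner regularity of the volume). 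Your P\'olya--Szeg\H{o} treatment of (ii) and (iv) is exactly the paper's, and your constant check $\Gamma(1+\tfrac n2)\omega_n=\pi^{n/2}$ correctly identifies the Aubin--Talenti constant in (i). For the rigidity half the paper simply cites the known isometry results for (i)--(iii), (vi), (vii) (Ledoux, Druet--Hebey--Vaugon, Bakry--Concordet--Ledoux, Ni, P.~Li) and deduces the case of (v) from (i); for (iv) and (viii) it extracts only the coarse bound $\liminf_{t\to\infty}V(B_t(x))/(\omega_n t^n)\gtrsim 1$ from Li--Yau and concludes diffeomorphism via Cheeger--Colding. Your more quantitative arguments (Cheng's eigenvalue comparison for (iv); the precise large-distance Green-function asymptotics for (viii)) would, if carried out, yield the exact inequality $\theta_\infty\ge1$ and hence isometry, which is stronger than the paper's stated conclusion for those two cases; the one caution is that the sharp Green asymptotics presuppose maximal volume growth, so you must first run the coarse argument (as the paper does) to know $\theta_\infty>0$ before invoking them. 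Nothing in your outline is wrong; the trade-off is that your version is more self-contained where the paper leans on the literature.
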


\begin{proof} To begin with, the sharpness of the above eight statements is due to
the fact that all equalities there can occur when $M^n=\mathbb R^n$.

Next, let us check each statement. Note that the Euclidean
counterpart of (v) is Maz'ya's sharp isocapacitary estimate in
\cite[page 105, (7)]{Ma0} whose proof depends only on the sharp
Euclidean isoperimetric inequality. So, (v) is true under the
hypothesis. The validity of (v) is used to imply
\begin{eqnarray*}
\int_{M^n}|f|^\frac{2n}{n-2}\,dV&=&\int_0^\infty V\big(\{x\in
M^n:\
|f(x)|\ge t\}\big)\,dt^\frac{2n}{n-2}\\
&\le&\frac{\int_0^\infty
cap_2\big(\{x\in M^n:\ |f(x)|\ge
t\}\big)^\frac{n}{n-2}\,dt^\frac{2n}{n-2}}{\Big((n(n-2))^\frac12\omega_n^\frac1n\Big)^\frac{2n}{n-2}}.
\end{eqnarray*}
According to Maz'ya's \cite[Remark 5 \& Proposition 1]{Ma}, we have
$$
\left(\int_0^\infty cap_2\big(\{x\in M^n:\ |f(x)|\ge
t\}\big)^\frac{n}{n-2}\,dt^\frac{2n}{n-2}\right)^\frac{n-2}{2n}\\
\le\frac{\left(\int_{M^n}|\nabla f|^2\,dV\right)^\frac12}{\Big(\frac{\Gamma(\frac{n}{2})\Gamma(1+\frac{n}{2})}{\Gamma(n)}\Big)^\frac1n},
$$
therefore deriving (i).

According to Ni's argument for Perelman's proposition (cf.
\cite[Proposition 4.1]{Ni}) we find easily that if $(M^n,ds^2)$
allows the optimal isoperimetric inequality of Euclidean type then
(iii) holds. This in turn implies (vii) and so (vi) -- see
Bakry-Concordet-Ledoux \cite[Theorem 1.2]{BaCoLe}.

The verification of (ii) is more or less contained in
Druet-Hebey-Vaugon's argument for \cite[Theorem 5.1]{DrHeVau}. To
see this, we may just prove that (ii) is valid for any function
$f\in C_0^\infty(M^n)$ which is continuous and has only
non-degenerate critical points in its support. As with such a
function $f$, let $g:\mathbb R^n\mapsto\mathbb R$ be nonnegative,
radial, decreasing with respect to $|x|$, and be determined by
$$
V_e\big(\{x\in\mathbb R^n:\ g(x)\ge t\}\big)=V\big(\{x\in M^n:\
f(x)\ge t\}\big).
$$
Then $g$ has a compact support in $\mathbb R^n$ and enjoys
$$
\int_{M^n}f^j\,dV=\int_{\mathbb R^n}g^j\,dV_e,\quad j=1,2
$$
and
\begin{eqnarray*}
-\int_{f^{-1}[t]}|\nabla f|^{-1}\,dS&=&\frac{d}{dt}V\big(\{x\in
M^n:\ f(x)\ge t\}\big)\\
&=&\frac{d}{dt}V_e\big(\{x\in\mathbb R^n:\ g(x)\ge
t\}\big)\\
&=&-\int_{g^{-1}[t]}|\nabla_e g|^{-1}\,dS_e,
\end{eqnarray*}
where $f^{-1}[t]$ and $g^{-1}[t]$ are the pre-images of $t$ under
$f$ and $g$ respectively; $dS$ and $dS_e$ denote the area elements
associated with $M^n$ and $\mathbb R^n$ respectively; and $\nabla_e$
stands for the Euclidean gradient.

Since
$$
(n\omega_n)^{-1}=\frac{V_e(U)^\frac{n-2}{n}}{S_e(\partial U)}
$$
holds for any Euclidean ball $U\subseteq\mathbb R^n$, the sharp
isoperimetric inequality of Euclidean type is applied to yield
$$
S_e\big(g^{-1}[t]\big)\le S\big(f^{-1}[t]\big),\quad t>0.
$$
Note that $|\nabla_e g|$ equals a positive constant on $g^{-1}[t]$.
So the last inequality plus the Cauchy-Schwarz inequality implies
\begin{eqnarray*}
\left(\int_{g^{-1}[t]}|\nabla_e
g|\,dS_e\right)\left(\int_{g^{-1}[t]}\frac{dS_e}{|\nabla_e
g|}\right)&=&S_e\big(g^{-1}[t]\big)^2\le
S\big(f^{-1}[t]\big)^2\\
&\le&\left(\int_{f^{-1}[t]}|\nabla
f|\,dS\right)\left(\int_{f^{-1}[t]}\frac{dS}{|\nabla f|}\right),
\end{eqnarray*}
and consequently,
$$
\int_{g^{-1}[t]}|\nabla_e g|\,dS_e\le\int_{f^{-1}[t]}|\nabla
f|\,dS,\quad t>0.
$$
Now the co-area formula, along with the last inequality, gives
\begin{eqnarray*}
\int_{\mathbb R^n} |\nabla_e
g|^2\,dV_e&=&\int_0^\infty\left(\int_{g^{-1}[t]}|\nabla_e
g|\,dS_e\right)dt\\
&\le&\int_0^\infty\left(\int_{f^{-1}[t]}|\nabla
f|\,dS\right)dt\\
&=&\int_{\Omega}|\nabla f|^2\,dV.
\end{eqnarray*}
Now an application of the sharp Nash's inequality on $\mathbb R^n$
(due to Carlen-Loss \cite{CaLo}) produces
\begin{eqnarray*}
\left(\int_{M^n}f^2\,dV\right)^{1+\frac2n}&=&\left(\int_{\mathbb
R^n}g^2\,dV_e\right)^{1+\frac2n}\\
&\le&\frac{(n+2)^\frac{n+2}{n}}{n(2\omega_n)^\frac2n\lambda_N}\Big(\int_{\mathbb
R^n}g\,dV_e\Big)^\frac4n\int_{\mathbb R^n}|\nabla_e g|^2\,dV_e\\
&\le&\frac{(n+2)^\frac{n+2}{n}}{n(2\omega_n)^\frac2n\lambda_N}\Big(\int_{M^n}
f\,dV\Big)^\frac4n\int_{M^n}|\nabla f|^2\,dV,
\end{eqnarray*}
as desired.

To reach (iv), let $\Omega$ be any regular bounded open subset of
$M^n$ and $f\in C_0^\infty(\Omega)$ be nonnegative. In a similar
manner to proving the optimal Nash's inequality above, we may choose
a Euclidean ball $B\subseteq \mathbb R^n$ such that
$V_e(B)=V(\Omega)$. Given $t>0$, let $\Omega_t=\{x\in\Omega:\
f(x)\ge t\}$ and $g$ be a nonnegative radial $C^\infty_0(B)$
function with
$$
B_t=\{y\in B:\ g(y)\ge t\};\quad  V_e(B_t)=V\big(\Omega_t\big).
$$
Then
$$
-\int_{\partial B_t}|\nabla_e
g|^{-1}\,dS_e=\frac{dV_e(B_t)}{dt}=\frac{dV(\Omega_t)}{dt}=-\int_{\partial
\Omega_t}|\nabla f|^{-1}\,dS.
$$
Additionally,
$$
\int_\Omega f^2\,dV=\int_0^\infty V(\Omega_t)\,dt^2=\int_0^\infty
V_e(B_t)\,dt^2=\int_{B}g^2\,dV_e.
$$
From the sharp isoperimetric inequality of Euclidean type, the fact
that $|\nabla_e g|$ is equal to a non-zero constant on $\partial
B_t$, and the Cauchy-Schwarz inequality, we conclude
\begin{eqnarray*}
\left(\int_{\partial B_t}|\nabla_e g|\,dS_e\right)\left(\int_{\partial B_t}|\nabla_e g|^{-1}\,dS_e\right)&=&S_e(\partial B_t)^2\le S(\partial\Omega_t)^2\\
&\le&\left(\int_{\partial \Omega_t}|\nabla f|\,dS\right)\left(\int_{\partial\Omega_t}|\nabla f|^{-1}\,dS\right),
\end{eqnarray*}
and so,
$$
\int_{\partial B_t}|\nabla_e g|\,dS_e\le\int_{\partial
\Omega_t}|\nabla f|\,dS,\quad t>0.
$$
Furthermore, the co-area formula is used once again to deduce
$$
\int_B |\nabla_e g|^2\,dV_e=\int_0^\infty\left(\int_{\partial B_t}|\nabla_e g|\,dS_e\right)dt\le\int_0^\infty\left(\int_{\partial\Omega_t}|\nabla f|\,dS\right)dt=\int_{\Omega}|\nabla f|^2\,dV.
$$
As a result, we find
$$
\frac{\int_\Omega f^2\,dV}{\int_\Omega |\nabla
f|^2\,dV}\le\frac{\int_B g^2\,dV_e}{\int_B |\nabla_e g|^2\,dV_e},
$$
thereby getting
$$
\lambda(\Omega)^{-1}\le\frac{V(\Omega)^\frac2n}{\lambda_{1,e}(B)V_e(B)^\frac2n}\le\frac{V(\Omega)^\frac2n}{\inf_{D\subseteq\mathbb
R^n}\lambda_{1,e}(D)V_e(D)^\frac2n}.
$$

The statement (viii) follows from the forthcoming Theorem
\ref{thm2+} (ix) since there is a sequence
$\{\Omega_i\}_{i=1}^\infty$ of regular bounded open subsets of $M^n$
such that
$$
x\in\Omega_1\subseteq\Omega_2\subseteq\cdots\subseteq M^n,\quad
\cup_{i=1}^\infty\Omega_i=M^n\quad\hbox{and}\quad
G_{\Omega_i}(x,\cdot)\nearrow G(x,\cdot)
$$
for a given point $x\in M^n$; see also \cite{LiT}.

Finally, let us handle the rest of Theorem \ref{thm2}.

It is known that if (i)/(ii)/(iii)/(vi)/(vii) is true and
$(M^n,ds^2)$ has nonnegative Ricci curvature then $M^n$ is isometric
to $\mathbb R^n$ -- see Varopoulos \cite{Va} and Ledoux \cite{Led},
Druet-Hebey-Vaugon \cite{DrHeVau}, Xia \cite{Xia1} (cf. Xia's paper
\cite{Xia2} regarding Galiardo-Nirenberg's inequalities on manifolds
of nonnegative Ricci curvature and Lutwark-Yang-Zhang's work
\cite{LuYaZh} on the optimal affine Galiardo-Nirenberg inequality of
Euclidean type), Bakry-Concordet-Ledoux \cite{BaCoLe}, Ni \cite{Ni}
(cf. Ni \cite{Ni1} and Kotschwar-Ni \cite{KoNi} extending the sharp
$L^p (1<p<n)$ Sobolev logarithmic inequality in DelPino-Dolbeault
\cite{DPiDo}), and Li \cite{LiP}. Since (v) implies (i), if (v) is
valid with $(M^n,ds^2)$ having nonnegative Ricci curvature then
$M^n$ must be isometric to $\mathbb R^n$.

In accordance with Theorem \ref{thm1}, if anyone of (iv) and (viii)
is valid then Theorem \ref{thm1} (vii) is true and hence from Li-Yau
\cite{LiYau} it follows that when $B_r(x)$ represents the geodesic
ball $\{y\in M^n: d(y,x)<r\}$ with radius $r>0$ and center $x\in
M^n$ we get
$$
1\lesssim\liminf_{t\to\infty}
V\big(B_t(x)\big)p_{t^2}(x,y)\quad\hbox{and}\quad
1\lesssim\liminf_{t\to\infty} \frac{V\big(B_t(x)\big)}{\omega_n t^n}
\quad x,y\in M^n.
$$
Since the Ricci curvature of $(M^n,ds^2)$ is nonnegative, an
application of Gromov's comparison theorem (cf. \cite[page 11]{Heb})
produces a constant $\kappa$ depending only on $n$ such that
$$
\kappa\le\frac{V\big(B_r(x)\big)}{\omega_n r^n}\le 1\quad\hbox{for\
all}\ r>0\ \hbox{and}\ x\in M^n.
$$
This last estimate indicates that $\kappa\le 1$ is true.
Furthermore, from Cheeger-Colding \cite{ChCo} it follows that $M^n$
is diffeomorphic to $\mathbb R^n$.
\end{proof}

The follow-up seems quite natural.

\begin{theorem}\label{thm2+} Let $(M^n,ds^2)$, $n\ge 3$, be a non-compact
complete boundary-free Riemannian manifold with the sharp
isoperimetric inequality of Euclidean type. Then the following
sharp:

\item{\rm(ix)} Lorentz-Green's local inequality
$$
\sup_{x\in\Omega}V\big(\{y\in\Omega:\ G_\Omega(x,y)\ge
t\}\big)\le\big(n(n-2)\omega_n^\frac2n
t+V(\Omega)^\frac{2-n}{n}\big)^\frac{n}{2-n}
$$
holds for all regular bounded open sets $\Omega\subseteq M^n$ and
every $t>0$.

\item{\rm(x)} Green's potential-volume inequality
$$
\sup_{x\in \Omega}\int_{\Omega} G_\Omega(x,y)\,dV(y)\le
(2n\omega_n^\frac2n)^{-1}V(\Omega)^\frac2n
$$
holds for all regular bounded open sets $\Omega\subseteq M^n$.

On the other hand, if either (ix) or (x) is true under $(M^n,ds^2)$
being of nonnegative Ricci curvature then $M^n$ is diffeomorphic to
$\mathbb R^n$.
\end{theorem}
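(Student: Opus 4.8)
The plan is to prove the sharp local estimate (ix) by a level-set/symmetrization argument, to obtain (x) from (ix) by a one-variable integration, and to settle the rigidity part by routing (ix)/(x) into the coarse equivalences of Theorems \ref{thm1}--\ref{thm1+} and then invoking Li--Yau's heat-kernel lower bound together with Bishop--Gromov and Cheeger--Colding. For (ix), fix a regular bounded $\Omega\subseteq M^n$ and $x\in\Omega$, put $g=G_\Omega(x,\cdot)$ and $v(t)=V\big(\{y\in\Omega:\ g(y)\ge t\}\big)$, which is finite and nonincreasing for $t>0$ with $v(0^+)=V(\{g>0\})\le V(\Omega)$. The distributional identity $\Delta g=\delta_x$ together with the divergence theorem applied on $\{g>t\}$ yields the flux identity $\int_{\{g=t\}}|\nabla g|\,dS=1$, valid for almost every $t>0$, while the co-area formula gives $-v'(t)=\int_{\{g=t\}}|\nabla g|^{-1}\,dS$ for almost every $t$. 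The Cauchy--Schwarz inequality on the (regular) level set $\{g=t\}$ then produces $S\big(\{g=t\}\big)^2\le\big(\int_{\{g=t\}}|\nabla g|\,dS\big)\big(\int_{\{g=t\}}|\nabla g|^{-1}\,dS\big)=-v'(t)$, and feeding this into the sharp isoperimetric inequality of Euclidean type, $n\omega_n^{1/n}v(t)^{(n-1)/n}\le S(\{g=t\})$, we obtain the differential inequality $-v'(t)\ge n^2\omega_n^{2/n}\,v(t)^{2(n-1)/n}$. Since $n\ge3$, the substitution $w=v^{(2-n)/n}$ converts this into $w'(t)\ge n(n-2)\omega_n^{2/n}$; integrating from $0$ and using $w(0^+)\ge V(\Omega)^{(2-n)/n}$ gives $v(t)^{(2-n)/n}\ge V(\Omega)^{(2-n)/n}+n(n-2)\omega_n^{2/n}t$, which is precisely (ix) after raising both sides to the negative power $n/(2-n)$ and taking the supremum over $x$. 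Equality throughout, hence sharpness, is witnessed by taking $\Omega$ a Euclidean ball centred at $x$, for which every level set $\{g\ge t\}$ is a concentric ball.

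For (x) one integrates the layer-cake formula against (ix):
\[
\int_\Omega g\,dV=\int_0^\infty v(t)\,dt\le\int_0^\infty\big(V(\Omega)^{\frac{2-n}{n}}+n(n-2)\omega_n^{\frac2n}t\big)^{\frac{n}{2-n}}\,dt.
\]
The last integral is elementary: substituting $u=V(\Omega)^{(2-n)/n}+n(n-2)\omega_n^{2/n}t$ and noting that the exponent $\frac{n}{2-n}+1=\frac{2}{2-n}$ is negative, so the boundary term at $+\infty$ vanishes, it equals $\frac{n-2}{2n(n-2)\omega_n^{2/n}}V(\Omega)^{2/n}=(2n\omega_n^{2/n})^{-1}V(\Omega)^{2/n}$, which is (x); again equality holds for Euclidean balls.

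For the converse, assume (ix) or (x). The sharp (ix) implies its coarse form, which by the integration just performed implies the coarse (x), so Theorem \ref{thm1+} makes every coarse statement in Theorems \ref{thm1}--\ref{thm1+} available, in particular the off-diagonal bound $p_t(x,y)\lesssim t^{-n/2}$. From here the argument is identical to the closing part of the proof of Theorem \ref{thm2}: Li--Yau's estimates give $1\lesssim\liminf_{t\to\infty}V\big(B_t(x)\big)/(\omega_n t^n)$, and since $(M^n,ds^2)$ has nonnegative Ricci curvature, Bishop--Gromov monotonicity (whose limit as $r\to\infty$ coincides with the infimum over $r$) promotes this to a uniform two-sided bound $\kappa\le V\big(B_r(x)\big)/(\omega_n r^n)\le1$ for all $r>0$, whence Cheeger--Colding's theorem forces $M^n$ to be diffeomorphic to $\mathbb R^n$. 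The only genuinely delicate point in the whole argument is the measure-theoretic bookkeeping behind (ix): justifying the flux identity, the co-area representation of $-v'$, and the Cauchy--Schwarz step for almost every $t$ (Sard's theorem together with the interior regularity of $g$ away from $x$), and dealing with a possibly disconnected $\Omega$ in the value $v(0^+)$. It is also worth noting that routing through the coarse equivalences destroys the extremal constant, so this argument can only yield ``diffeomorphic'' and not ``isometric'' -- recovering an isometry would require a sharp Faber--Krahn or sharp $L^2$-Sobolev input, which neither (ix) nor (x) supplies.
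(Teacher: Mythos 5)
Your proposal is correct and follows essentially the same route as the paper: the same level-set differential inequality for $V_t$ via the flux identity, Cauchy--Schwarz and the sharp isoperimetric inequality for (ix), the same layer-cake integration for (x), and the same reduction of the rigidity claim to the coarse equivalences followed by Li--Yau, Bishop--Gromov and Cheeger--Colding. Your added remarks on the measure-theoretic justification of the flux/co-area steps and on why only ``diffeomorphic'' (not ``isometric'') can be extracted are accurate refinements of what the paper leaves implicit.
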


\begin{proof} The sharpness of (ix) and (x) can be verified by taking $M^n=\mathbb R^n$ and $\Omega=B_r(x)$.

To prove (ix), for the sake of simplicity let us write
$$
V_t=V\big(\{y\in\Omega:\ G_\Omega(x,y)\ge t\}\big)\quad\hbox{for\ \
given}\ \ x\in\Omega\ \ \hbox{and\ \ any}\ \ t\ge 0.
$$
Note that
$$
\frac{d V_t}{dt}=-\int_{\{y\in\Omega:\ G_\Omega(x,y)=t\}}|\nabla
G_\Omega(x,y)|^{-1}\,dS
$$
and
$$
1=\int_{\{y\in\Omega:\ G_\Omega(x,y)=t\}}|\nabla G_\Omega(x,y)|\,dS.
$$
So, a combined application of the Cauchy-Schwarz inequality and the
sharp isoperimetric inequality of Euclidean type gives
$$
-\frac{d V_t}{dt}\ge\Big(\int_{\{y\in\Omega:\
G_\Omega(x,y)=t\}}\,dS\Big)^2\ge\Big((n\omega_n^\frac1n)V_t^\frac{n-1}{n}\Big)^2.
$$
Integrating both sides of the last differential inequality over
$[t_1,t_2]$ where $0\le t_1<t_2<\infty$, we obtain
$$
V_{t_2}^\frac{2-n}{n}\ge
V_{t_1}^\frac{2-n}{n}+n(n-2)\omega_n^\frac2n (t_2-t_1),
$$
whence deducing (ix).

To check (x), we apply the foregoing notations and the
just-demonstrated (ix) to achieve
\begin{eqnarray*}
\int_\Omega G_\Omega(x,y)\,dV(y)&=&\int_0^\infty V_t\,dt\\
&\le& n^2\omega_n^\frac2n V(\Omega)^\frac{2(n-1)}{n}\int_0^\infty\Big(1+n(n-2)V(\Omega)^\frac{n-2}{n}\omega_n^\frac2n t\Big)^\frac{2(n-1)}{n-2}\,dt\\
&=&(2n\omega_n^\frac2n)^{-1}V(\Omega)^\frac2n.
\end{eqnarray*}

According to Theorem \ref{thm1+}, if (iv) or (viii) is true then the
condition of Theorem \ref{thm1} (vii) is valid, and consequently the
argument for the rigidity part of Theorem \ref{thm2} can be used to
derive that $M^n$ is diffeomorphic to $\mathbb R^n$.
\end{proof}

In spirit of Theorem \ref{thm2+} we fortunately discover an optimal
version of Theorem \ref{cor1}.

\begin{theorem}\label{con2} Let $(M^n,ds^2)$, $n\ge 3$, be a non-compact
complete boundary-free Riemannian manifold with the sharp
isoperimetric inequality of Euclidean type. If $0\le
q<p<\frac{n}{n-2}$ then the following comparison inequality
$$
\left(\int_{\Omega}
G_\Omega(x,y)^p\,dV(y)\right)^\frac{n}{n-p(n-2)}\le\kappa_{n,p,q}
\left(\int_\Omega G_\Omega(x,y)^q\,dV(y)\right)^\frac{n}{n-q(n-2)}
$$
holds for any regular bounded open set $\Omega\subseteq M^n$
containing $x$, with equality when $(M^n,\Omega)=(\mathbb
R^n,B_r(x))$, where
$$
\kappa_{n,p,q}=\frac{\left(\frac{\big(n(n-2)\omega_n^\frac{2}{n}\big)^q}{qB\big(\frac{n}{n-2}-q,q\big)}\right)^\frac{n}{n-q(n-2)}
}{\left(\frac{\big(n(n-2)\omega_n^\frac{2}{n}\big)^p}{pB\big(\frac{n}{n-2}-p,p\big)}\right)^\frac{n}{n-p(n-2)}}.
$$
Moreover, if
$$
\mathsf{G}(p;
x,\Omega)=\left(\frac{\big(n(n-2)\omega_n^\frac2n\big)^p}{pB\big(\frac{n}{n-2}-p,p\big)}\int_\Omega
G_\Omega(x,y)^p\,dV(y)\right)^\frac{n}{n-p(n-2)}
$$
then
$$
\lim_{p\to\frac{n}{n-2}}\mathsf{G}(p;x,\Omega)=\lim_{t\to\infty}D_\Omega(x,t)^\frac{n}{2-n},
$$
where
$$
D_\Omega(x,t)=V\big(\{y\in\Omega: G_\Omega(x,y)\ge
t\}\big)^\frac{2-n}{n}-n(n-2)\omega_n^\frac2n t
$$
is referred to as the Lorentz norm deficit of $G_\Omega(x,\cdot)$ at
$(x,t)$. In particular, if $(M^n,ds^2)$ has nonnegative Ricci
curvature then
$$
\lim_{t\to\infty}D_\Omega(x,t)^\frac{n}{2-n}=\omega_nR_\Omega(x)^n,
$$
where
$$
R_\Omega(x)=\lim_{y\to x}\Big(d(x,y)^{2-n}-n(n-2)\omega_n
G_\Omega(x,y)\Big)^\frac{1}{2-n}
$$
is called the harmonic radius of $\Omega$ at $x$.
\end{theorem}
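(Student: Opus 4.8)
The plan is to reduce everything to the sharp local inequality of Theorem~\ref{thm2+} together with a single change of variables. Write $m=\frac{n}{n-2}$, $b=n(n-2)\omega_n^{2/n}$, $c_p=\frac{b^p}{pB(m-p,p)}$, and $V_t=V(\{y\in\Omega:G_\Omega(x,y)\ge t\})$, so $\mathsf{G}(p;x,\Omega)=\bigl(c_p\int_\Omega G_\Omega(x,y)^p\,dV(y)\bigr)^{m/(m-p)}$; unwinding the definition of $\kappa_{n,p,q}$, the comparison inequality is exactly the statement that $p\mapsto\mathsf{G}(p;x,\Omega)$ is non-increasing on $[0,m)$, with equality throughout precisely when $(M^n,\Omega)=(\mathbb{R}^n,B_r(x))$. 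The key structural fact, read off from the proof of Theorem~\ref{thm2+}(ix) in its increment form $V_{t_2}^{(2-n)/n}\ge V_{t_1}^{(2-n)/n}+b(t_2-t_1)$, is that the deficit $D_\Omega(x,t)=V_t^{(2-n)/n}-bt$ is non-decreasing in $t$; equivalently $V_t=(bt+D_\Omega(x,t))^{-m}$ with $D_\Omega(x,\cdot)$ increasing from $D_\Omega(x,0)=V(\Omega)^{(2-n)/n}$ up to $D_\infty:=\lim_{t\to\infty}D_\Omega(x,t)$. Substituting $u=V_t^{(2-n)/n}$ turns $\int_\Omega G_\Omega(x,y)^p\,dV(y)$ into $m\int_{V(\Omega)^{(2-n)/n}}^\infty\tau(u)^p u^{-m-1}\,du$, where $\tau$ is the inverse of $t\mapsto V_t^{(2-n)/n}$, is non-decreasing with $\tau'\le b^{-1}$ and $\tau(V(\Omega)^{(2-n)/n})=0$; the affine extremal $\tau$ is realized exactly by the Euclidean ball, and the Beta-function normalization is precisely the one making $\mathsf{G}(p;x,B_r(x))=V(B_r(x))$ identically in $p$.

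For the monotonicity I would first establish the case $q=0$ — that is, $\mathsf{G}(p;x,\Omega)\le V(\Omega)$, the $L^p$ sharpening of Theorem~\ref{thm2+}(x) — directly from the displayed integral, using $\tau(u)\le b^{-1}\bigl(u-V(\Omega)^{(2-n)/n}\bigr)$ and evaluating the resulting Beta integral. Then I would exploit that the truncation $(G_\Omega(x,\cdot)-s)_+$ is the Green function $G_{\Omega_s}(x,\cdot)$ of the super-level set $\Omega_s=\{y\in\Omega:G_\Omega(x,y)>s\}$: applying the $q=0$ case to each $\Omega_s$ yields the family $c_p\,p\int_s^\infty(r-s)^{p-1}V_r\,dr\le V_s^{(m-p)/m}$ for all $s\ge0$. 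Feeding this bound for $V_s$ into $\int_\Omega G_\Omega(x,y)^q\,dV(y)=q\int_0^\infty s^{q-1}V_s\,ds$ and integrating, while keeping the scale that makes the ball an equality case, should deliver $\mathsf{G}(p;x,\Omega)\le\mathsf{G}(q;x,\Omega)$ with exactly the constant $\kappa_{n,p,q}$. I expect the arithmetic of making the constants land precisely on $\kappa_{n,p,q}$, rather than on a non-sharp multiple, to be the main obstacle; it is why the Beta normalization must be tracked through every estimate.

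For the limit, comparison of $V_t=(bt+D_\Omega(x,t))^{-m}\ge(bt+D_\infty)^{-m}$ with the Green function of the Euclidean ball of volume $D_\infty^{-m}$ (whose deficit is the constant $D_\infty$ and for which $\mathsf{G}\equiv D_\infty^{-m}$) already gives $\mathsf{G}(p;x,\Omega)\ge D_\infty^{-m}$ for every $p<m$. For the matching upper estimate I would return to $\int_\Omega G_\Omega(x,y)^p\,dV(y)=m\int\tau(u)^p u^{-m-1}\,du$ with $\tau(u)=b^{-1}(u-\delta(u))$, $\delta(u)=D_\Omega(x,\tau(u))\uparrow D_\infty$: since $c_p\to0$ like $\frac{m-p}{m}b^m$ as $p\to m$ while the integral blows up only from large $u$, where $\delta(u)\to D_\infty$, a split at a large $u$-threshold is expected to give $c_p\int_\Omega G_\Omega(x,y)^p\,dV(y)=D_\infty^{p-m}\bigl(1+o(m-p)\bigr)$, hence $\mathsf{G}(p;x,\Omega)=\bigl(c_p\int_\Omega G_\Omega^p\bigr)^{m/(m-p)}\to D_\infty^{-m}=\lim_{t\to\infty}D_\Omega(x,t)^{n/(2-n)}$. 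Controlling the error term uniformly, so that it survives being raised to the exploding power $m/(m-p)$, is the second delicate point.

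Finally, under nonnegative Ricci curvature I would identify $D_\infty$ with the harmonic radius. From the expansion $G_\Omega(x,y)=(n(n-2)\omega_n)^{-1}d(x,y)^{2-n}+H_\Omega(x,y)$ with $H_\Omega$ continuous and $H_\Omega(x,x)$ finite, the definition of $R_\Omega(x)$ forces $R_\Omega(x)^{2-n}=-n(n-2)\omega_n H_\Omega(x,x)$. As $t\to\infty$ the set $\{y\in\Omega:G_\Omega(x,y)\ge t\}$ collapses to $x$, and the Bishop--Gromov inequality together with $V(B_r(x))=\omega_n r^n(1+o(1))$ as $r\to0$ pins its volume to that of the geodesic ball $B_{\rho_t}(x)$ with $\rho_t^{2-n}=n(n-2)\omega_n\bigl(t-H_\Omega(x,x)\bigr)+o(1)$; thus $V_t^{(2-n)/n}=bt-n(n-2)\omega_n^{2/n}H_\Omega(x,x)+o(1)$, so $D_\infty=-n(n-2)\omega_n^{2/n}H_\Omega(x,x)=\omega_n^{(2-n)/n}R_\Omega(x)^{2-n}$, and raising to the power $n/(2-n)$ gives $\lim_{t\to\infty}D_\Omega(x,t)^{n/(2-n)}=\omega_n R_\Omega(x)^n$, completing the argument.
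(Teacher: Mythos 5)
Your reduction to the monotonicity of the deficit $D_\Omega(x,\cdot)$, the $q=0$ case, the lower bound $\mathsf G(p;x,\Omega)\ge\lim_{t\to\infty}D_\Omega(x,t)^{n/(2-n)}$, and the sharpness computation for the ball all match the paper. But the two steps you yourself flag as ``the main obstacle'' and ``the second delicate point'' are the substance of the theorem, and the first one does not close as sketched. Keep your notation $m=\tfrac{n}{n-2}$, $c_p=\tfrac{(n(n-2)\omega_n^{2/n})^p}{pB(m-p,p)}$, and set $\Phi(s)=p\int_s^\infty(r-s)^{p-1}V_r\,dr=\int_{\Omega_s}G_{\Omega_s}(x,y)^p\,dV(y)$. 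Your truncation device (a genuinely different idea from the paper's) yields $V_s\ge(c_p\Phi(s))^{m/(m-p)}$ for a.e.\ $s$, hence
$$
\int_\Omega G_\Omega(x,y)^q\,dV(y)\ \ge\ q\int_0^\infty s^{q-1}\bigl(c_p\Phi(s)\bigr)^{\frac{m}{m-p}}\,ds,
$$
and to conclude you would need the right-hand side to dominate $c_q^{-1}\bigl(c_p\Phi(0)\bigr)^{(m-q)/(m-p)}$. That is a \emph{lower} bound for a weighted integral of $\Phi$ by a power of $\Phi(0)$, which is false for a general decreasing $\Phi$ (it may drop to $0$ immediately after $s=0$, annihilating the left side while the right side stays positive). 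So one must inject a quantitative bound on how fast $\Phi$ can decay, i.e.\ a differential inequality for $\Phi$ extracted from the isoperimetric/level-set structure. This is exactly the mechanism the paper supplies through $U_q(r)=-\int_r^\infty t^q\,dV_t$: the pointwise estimate $(\alpha U_q(r))^{m/(m-q)}\le D_\Omega(x,r)^{-m}$ is combined with $\tfrac{dU_q}{dr}\le-(n\omega_n^{1/n})^2r^qV_r^{2(n-1)/n}$ into a closed ODE inequality, which is integrated twice to reach $U_p(0)$ with the exact constant $\kappa_{n,p,q}$. Your sketch omits this entire mechanism, and without it neither the sharp constant nor any finite constant is obtained.

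On the limit formula, the split at a large threshold is the right idea, but the danger you name --- an error term surviving the power $\tfrac{m}{m-p}\to\infty$ --- is genuine and is what the paper's argument is built to defeat: it uses the elementary inequality $(a+b)^c\le a^c+c2^{c-1}(b^c+ba^{c-1})$ applied to $U_p(r)+\int_0^rV_t\,dt^p$ together with the asymptotics of $pB(m-p,p)$ as $p\to m$ to show the finite piece $\int_0^rV_t\,dt^p$ disappears after normalization; asserting that the split ``is expected to give'' the answer leaves this undone. Finally, in the Ricci step, Bishop--Gromov plus $V(B_r(x))=\omega_nr^n(1+o(1))$ is not sufficient as stated: the multiplicative $o(1)$ hits $V_t^{(2-n)/n}\sim n(n-2)\omega_n^{2/n}t\to\infty$ and would produce an additive error $o(t)$ in $D_\Omega(x,t)$, destroying convergence. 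You need the exact identity $V(B_r(x))=\omega_nr^n$, which the paper obtains from the rigidity statement that nonnegative Ricci curvature together with the sharp isoperimetric inequality forces $M^n$ to be isometric to $\mathbb R^n$.
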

\begin{proof} Let us still use the notations introduced in the proofs of Theorems \ref{thm2} \& \ref{thm2+}. Without loss of generality we may assume $\int_\Omega G_\Omega(x,y)^q\,dV(y)<\infty$ -- otherwise there is nothing to argue.
Under the sharp isoperimetric inequality of Euclidean type we have
the following monotone inequality:
$$
D_\Omega(x,t)\ge D_\Omega(x,r)\quad\hbox{for}\quad t\ge r\ge 0.
$$

Case 1: $q=0$. Clearly, $0<p<\frac{n}{n-2}$, the last inequality and
the layer-cake formula imply
\begin{eqnarray*}
\int_{\Omega} G_\Omega(x,y)^p\,dV(y)&\le& p\int_0^\infty
t^{p-1}\big(V_0^\frac{2-n}{n}+n(n-2)\omega_n^\frac2n
t\big)^\frac{n}{2-n}\,dt\\
&=& V(\Omega)\int_0^\infty
t^{p-1}\big(1+V(\Omega)^\frac{n-2}{n}n(n-2)\omega_n^\frac2n
t\big)^\frac{n}{2-n}\,dt\\
&=&\frac{pV(\Omega)^{1-\frac{p(n-2)}{n}}}{\big(n(n-2)\omega_n^\frac2n\big)^p}\int_0^\infty
r^{p-1}(1+r)^\frac{n}{2-n}\,dr,
\end{eqnarray*}
as desired.

Case 2: $q>0$. For simplicity, set
$$
U_q(r)=-\int_r^\infty t^q dV_t.
$$
Through integrating by parts, changing variables and using the
previous monotone inequality we obtain
\begin{eqnarray*}
U_q(r)&\le& r^qV_r+q\int_r^\infty \big(D_\Omega(x,r)+n(n-2)\omega_n^\frac2n t\big)^\frac{n}{2-n} t^{q-1}\,dt\\
&=&n^2\omega_n^\frac2n\int_r^\infty\big(D_\Omega(x,r)+n(n-2)\omega_n^\frac2n t\big)^\frac{2(n-1)}{2-n} t^{q}\,dt\\
&=&\frac{n^2\omega_n^\frac2nD_\Omega(x,r)^{\frac{n-(n-2)q}{n-2}}}{\big(n(n-2)\omega_n^\frac2n\big)^{q+1}}\int_{\frac{n(n-2)\omega_n^\frac2n}{D_\Omega(x,r)}}^\infty(1+t)^\frac{2(n-1)}{2-n}
t^q\,dt,
\end{eqnarray*}
thereby getting
$$
\left(\frac{\big(n(n-2)\omega_n^\frac{2}{n}\big)^qU_q(r)}{qB\big(\frac{n}{n-2}-q,q\big)}\right)^\frac
n{n-q(n-2)}\le D_\Omega(x,r)^\frac{n}{2-n}.
$$
Note that
$$
\frac{dU_q(r)}{dr}=r^q \frac{dV_r}{dr}\le-(n\omega_n^\frac1n)^2 r^q
V_r^\frac{2(n-1)}{n}.
$$
So, the foregoing two inequalities produce the following
differential inequality
$$
r^q\left(\big(\alpha
U_q(r)\big)^\frac{2-n}{n-q(n-2)}+n(n-2)\omega_n^\frac2n
r\right)^\frac{2(n-1)}{2-n}\le -(n\omega_n^\frac1n)^{-2}\frac{d
U_q(r)}{dr},
$$
where
$$
\alpha=\frac{\big(n(n-2)\omega_n^\frac2n\big)^q}{qB\big(\frac{n}{n-2}-q,q\big)}.
$$
Since $U_q(r)\le U_q(0)$, the last differential inequality is used
to derive
$$
r^q\left(\alpha^\frac{2-n}{n-q(n-2)}+
U_q(0)^\frac{n-2}{n-q(n-2)}\Big(n(n-2)\omega_n^\frac2n
r\Big)\right)^\frac{2(n-1)}{2-n}\le
-(n\omega_n^\frac1n)^{-2}\frac{\frac{d
U_q(r)}{dr}}{U_q(r)^\frac{2(n-1)}{n-q(n-2)}}.
$$
Upon integrating this last inequality over $[0,s]$ against $dr$, we
get
\begin{eqnarray*}
&&U_q(s)^\frac{(2-n)(q+1)}{n-q(n-2)}\\
&&\le\ \frac{\int_0^s
\Big(\alpha^\frac{2-n}{n-q(n-2)}+rU_q(0)^\frac{n-2}{n-q(n-2)}\Big)^\frac{2(n-1)}{2-n}r^{q}{dr}}{
\Big(\frac{(n\omega_n^\frac1n)^2(n-2)(q+1)}{n-q(n-2)}\Big)^{-1}}+U_q(0)^\frac{(2-n)(q+1)}{n-q(n-2)}\\
&&=\
U_q(0)^\frac{(2-n)(q+1)}{n-q(n-2)}\left(1+\frac{\int_0^{\big(\alpha
U_q(0)\big)^\frac{n-2}{n-q(n-2)}n(n-2)\omega_n^\frac2n
s}(1+r)^\frac{2(n-1)}{2-n}r^q\,dr}{\Big(\frac{\alpha
n(q+1)}{\big(n(n-2)\omega_n^\frac2n\big)^q\big(n-q(n-2)\big)}\Big)^{-1}}\right).
\end{eqnarray*}
To shorten our notation, let
$$
\beta={\big(\alpha
U_q(0)\big)^\frac{n-2}{n-q(n-2)}n(n-2)\omega_n^\frac2n}.
$$
Then the last inequality, together with an integration-by-parts,
yields
\begin{eqnarray*}
U_p(0)&=&(p-q)\int_0^\infty U_q(s)s^{p-q-1}\,ds\\
&\le& U_q(0)\int_0^\infty\left(1+\frac{\int_0^{\beta
s}(1+r)^\frac{2(n-1)}{2-n}r^q\,dr}{\Big(\frac{\alpha
n(q+1)}{\big(n(n-2)\omega_n^\frac2n\big)^q\big(n-q(n-2)\big)}\Big)^{-1}}\right)^\frac{n-q(n-2)}{(2-n)(q+1)}
\,ds^{p-q}\\
&=&-U_q(0)\int_0^\infty
s^{p-q}\frac{d}{ds}\left(1+\frac{\int_0^{\beta
s}(1+r)^\frac{2(n-1)}{2-n}r^q\,dr}{\Big(\frac{\alpha
n(q+1)}{\big(n(n-2)\omega_n^\frac2n\big)^q\big(n-q(n-2)\big)}\Big)^{-1}}\right)^\frac{n-q(n-2)}{(2-n)(q+1)}\,ds\\
&=&\frac{\int_0^\infty
{u^p}{(1+u)^\frac{2(n-1)}{2-n}}\left(1+\frac{\int_0^u
v^q(1+v)^\frac{2(n-1)}{2-n}\,dv}{\Big(\frac{\alpha
n(q+1)}{\big(n(n-2)\omega_n^\frac2n\big)^q\big(n-(n-2)q\big)}\Big)^{-1}}\right)^\frac{2(n-1)}{(2-n)(q+1)}\,du}{\Big(\frac{\big(\alpha
U_q(0)\big)^{\frac{n-p(n-2)}{n-q(n-2)}}}{\big(\frac{n-2}{n}\big)\big(n(n-2)\omega_n^\frac2n\big)^{p}}\Big)^{-1}}\\
&\le&\frac{\big(\alpha
U_q(0)\big)^{\frac{n-p(n-2)}{n-q(n-2)}}}{\big(\frac{n-2}{n}\big)\big(n(n-2)\omega_n^\frac2n\big)^{p}}\int_0^\infty
u^p(1+u)^\frac{2(n-1)}{2-n}\,du\\
&=&\left(\frac{pB\big(\frac{n}{n-2}-p,p\big)}{\big(n(n-2)\omega_n^\frac2n\big)^{p}}\right)
\left(\Big(\frac{\big(n(n-2)\omega_n^\frac2n\big)^q}{qB\big(\frac{n}{n-2}-q,q\big)}\Big)
U_q(0)\right)^{\frac{n-p(n-2)}{n-q(n-2)}}.
\end{eqnarray*}
A simplification of the just-obtained equalities and inequalities,
along with
$$
U_p(0)=\int_\Omega G_\Omega(x,y)^p\,dV(y),
$$
gives the desired inequality.

Of course, if $M^n=\mathbb R^n$ and $\Omega=B_R(x)=\{y\in\mathbb
R^n:\ |x-y|<R\}$ (given $x\in\mathbb R^n$ and $R>0$), then

\[
G_\Omega(x,y)=\left\{\begin{array} {r@{\;,\quad}l}
\big(n(n-2)\omega_n\big)^{-1}\Big(|x-y|^{2-n}-\big(\frac{|y|}{R}\big)^{2-n}\big|\frac{R^2}{|y|^2}y-x\big|^{2-n}\Big)
& x\not=0,\\
\big(n(n-2)\omega_n\big)^{-1}\Big(|y|^{2-n}-R^{2-n}\Big) & x=0,
\end{array}
\right.
\]
and hence a direct computation yields the equality case of Theorem
\ref{con2} via
$$
\Big(\int_\Omega
G_\Omega(x,y)^p\,dV(y)\Big)^\frac{n}{n-p(n-2)}=\kappa_{n,p,0}V(\Omega).
$$
See also Weinberger \cite{We} or Bandle \cite[pages 59-61]{Ban}.

Last of all, let us deal with the limit formulas. Fixing $r\in
(0,\infty)$, employing
$$
{U_p(r)^\frac{n}{n-p(n-2)}}\le\kappa_{n,p,0}
D_\Omega(x,r)^\frac{n}{2-n},
$$
and the following elementary inequality (cf. \cite[page 389,
(17)]{Ada}):
$$
(a+b)^c\le a^c+c2^{c-1}(b^c+ba^{c-1})\quad \hbox{where}\quad a,b\ge
0;\ c\ge 1,
$$
we obtain that $U_p(r)\le \int_0^r V_t\,dt^p$ is valid for a
sufficient large $r$, whence reaching
\begin{eqnarray*}
\mathsf{G}(p;x,\Omega)
&\le&\left(\kappa_{n,p,0}^{\frac{p(n-2)-n}{n}}\Big(U_p(r)+\int_0^r V_t\,dt^p\Big)\right)^\frac{n}{n-p(n-2)}\\
&\le&\Big(V_r^\frac{2-n}{n}-n(n-2)\omega_n^\frac2n r\Big)^\frac{n}{2-n}+\left(\frac{\frac{n}{n-p(n-2)}}{\kappa_{n,p,0}2^{1-\frac{n}{n-p(n-2)}}}\right)\times\\
&&\times\left(\Big(\int_0^r V_t\,dt^p\Big)^\frac{n}{n-p(n-2)}+\frac{\int_0^r V_t\,dt^p}{U_p(r)^{1-\frac{n}{n-p(n-2)}}}\right)\\
&\le&D_\Omega(x,r)^\frac{n}{2-n}+\left(\frac{\frac{n}{n-p(n-2)}}{\kappa_{n,p,0}^\frac1n}\right)\Big(2\int_0^r
V_t\,dt^p\Big)^\frac{n}{n-p(n-2)}.
\end{eqnarray*}
Taking the asymptotic behavior of the following Beta function into
account:
$$
B\big(\frac{n}{n-2}-p,p+1\big)\sim\left(
\frac{\sqrt{2\pi}\big(\frac{n}{n-2}-p\big)^{\frac{n+2}{n-2}-p}(p+1)^{p+\frac12}}
{\big(\frac{2(n-1)}{n-2}\big)^\frac{3n-2}{n-2}}\right)^\frac{n}{p(n-2)-n},
$$
we gain
$$
\lim_{p\to\frac{n}{n-2}}\left(\frac{\frac{n}{n-p(n-2)}}{\kappa_{n,p,0}}\right)\Big(2\int_0^r
V_t\,dt^p\Big)^\frac{n}{n-p(n-2)}=0,
$$
and thus
$$
\lim_{p\to\frac{n}{n-2}}\mathsf{G}(p;x,\Omega)\le
D_\Omega(x,r)^\frac{n}{2-n}.
$$
Letting $r\to\infty$ on the right-hand-side of the last inequality,
we find
$$
\lim_{p\to\frac{n}{n-2}}\mathsf{G}(p;x,\Omega)\le\lim_{r\to\infty}D_\Omega(x,r)^\frac{n}{2-n}.
$$

At the same time, since $D_\Omega(x,s)^\frac{n}{2-n}$ decreases with
$s$, one has
\begin{eqnarray*}
U_p(0)&=&\int_0^\infty \Big(D_\Omega(x,s)+n(n-2)\omega_n^\frac2n
s\Big)^\frac{n}{2-n}\,ds^p\\
&\ge&\int_0^\infty\Big(\big(\lim_{s\to\infty}D_\Omega(x,s)\big)+n(n-2)\omega_n^\frac2n
s\Big)^\frac{n}{2-n}\,ds^p\\
&=&\lim_{s\to\infty}D_\Omega(x,s)^\frac{n-p(n-2)}{2-n}\big(n(n-2)\omega_n^\frac2n\big)^{-p}pB\big(\frac{n}{n-2}-p,p\big),
\end{eqnarray*}
producing
$$
\lim_{p\to\frac{n}{n-2}}\mathsf{G}(p;x,\Omega)\ge\lim_{r\to\infty}D_\Omega(x,r)^\frac{n}{2-n}.
$$
Therefore, the first limit formula follows.

In order to verify the second limit formula, we observe that
$G_\Omega(x,y)=r$ implies
$$
d(x,y)^{2-n}=n(n-2)\omega_n
r+R_\Omega(x)^{2-n}+o(1)\quad\hbox{as}\quad r\to\infty,
$$
and hence
$$
B_{r_{-}}(x)\subseteq \{y\in\Omega:\ G_\Omega(x,y)\ge r\}\subseteq
B_{r_+}(x),
$$
where
$$
r_{\pm}=\big(n(n-2)\omega_n r+R_\Omega(x)^{2-n}\big)^\frac1{2-n}\pm
o(1)\quad\hbox{as}\quad r\to\infty.
$$
Since the sharp isoperimetric inequality of Euclidean type is valid
for $(M^n,ds^2)$ which has nonnegative Ricci curvature, we conclude
(cf. \cite[page 244]{Heb}):
$$
V\big(B_{r_{\pm}}(x)\big)=\omega_n r_{\pm}^n.
$$
Consequently,
$$
\omega_n r_{-}^n\le V\big(\{y\in\Omega:\ G_\Omega(x,y)\ge
r\}\big)\le\omega_n r_+^n.
$$
This yields
$$
D_\Omega(x,r)^\frac{n}{2-n}=\omega_n R_\Omega(x)^n\pm
o(1)\quad\hbox{as}\quad r\to\infty.
$$
\end{proof}

\begin{remark}\label{rem2} Under the same hypothesis as in Theorem \ref{con2}, we can obtain
the sharp isoperimetric-type inequality for $0\le p<\frac{n}{n-2}$:
$$
\left(\frac{\big(n(n-2)\omega_n^\frac2n\big)^p}{pB\big(\frac{1}{n-2}-p,p\big)}\int_\Omega
G_\Omega(x,y)^p\,dV(y)\right)^\frac{n-1}{n-p(n-2)}\le(n\omega_n^\frac1n)^{-1}S(\partial\Omega),
$$
but also the non-sharp one for $-\frac{n}{n-2}<p<0$:
$$
V(\Omega)^\frac{n-p(n-2)}{n}\le
\big(n(n-2)\omega_n^\frac2n\big)^p(-p)B\big(\frac{n}{n-2}+p,-p\big)\int_\Omega
G_\Omega(x,y)^p\,dV(y),
$$
for all regular bounded domains $\Omega\subseteq M^n$ containing
$x$.

\end{remark}

\bibliographystyle{amsplain}

\end{document}